\newtheorem{thm}{Theorem}[section]
\newtheorem{lem}[thm]{Lemma}
\newtheorem{remark}[thm]{Remark}
\title[A discrete-ordinate discontinuous-streamline diffusion method for RTE]
{A discrete-ordinate discontinuous-streamline diffusion method for the radiative transfer equation}
\author[C. Wang, Q. Sheng, W. Han]{Cheng Wang$^\dag$ \and Qiwei Sheng$^\ddag$ \and Weimin Han$^\S$}
\date{}
\begin{document}

\thanks{$^\dag$ Department of Mathematics, Tongji University, Shanghai 200092, China (wangcheng@tongji.edu.cn).}
\thanks{$^\ddag$ Computational and Applied Mathematics Group, Oak Ridge National Laboratory, Oak Ridge, TN 37831 (shengq@ornl.gov)}
\thanks{$^\S$ Department of Mathematics, University of Iowa, Iowa City, IA 52242 (weimin-han@uiowa.edu)}

\begin{abstract}
The radiative transfer equation (RTE) arises in many different areas of science and engineering.
In this paper, we propose and investigate a discrete-ordinate discontinuous-streamline diffusion
(DODSD) method  for solving the RTE, which is a combination of the discrete-ordinate technique and the discontinuous-streamline
diffusion method.  Different from the discrete-ordinate discontinuous Galerkin (DODG) method for the RTE, an artificial
diffusion parameter is added to the test functions in the spatial discretization.  Stability and error estimates
in certain norms are proved.  Numerical results show that the proposed method can lead to a more accurate approximation
in comparison with the DODG method.

\smallskip
\noindent \textbf{Key words.} radiative transfer equation, discrete-ordinate method, discontinuous-streamline diffusion method, stability, error estimation

\smallskip
\noindent
\textbf{AMS subject classifications.} 65N30, 65R20

\end{abstract}

\maketitle

\section{Introduction}\label{sec:intro}
The radiative transfer equation, which describes the scattering and absorbing of radiation
through a medium, plays an important role in a wide range of applications such as
astrophysics, atmosphere and ocean,
heat transfer, neutron transport and nuclear physics, and so on.
Today, research on the RTE remains to be very active and important, especially in the biomedical optics fields, see e.g. \cite{A2009,Bal09,HECW11,Ren10,THH13}.

\begin{comment}
The radiative transfer equation, which describes the scattering and absorbing of radiation
through a medium, plays an important role in a wide range of applications such as
biomedical optics, neutron transport, heat transfer, infrared and visible light propagation
in space and the atmosphere, and so on. A very general application is in weather or
climate models (\cite{TS1999}), where the radiative transfer is calculated for atmospheres,
oceans, land surfaces, and ice. The recent developments of photon radiotherapy
(\cite{hensel2006deterministic, LCM1995}) and optical molecular imaging (\cite{A1999})
in biomedical science rely heavily on radiation transport modeling.
\end{comment}

The RTE can be viewed as a hyperbolic type integro-differential equation. Due to the
involvement of both integration and differentiation in the equation, as well as the
high dimension of the problem, it is challenging to develop effective numerical methods
for solving the RTE. The numerical methods can be basically divided into two categories:
statistical schemes and deterministic schemes. The interested readers are referred to
\cite{FC1971,H1998,book:LK1991,SG2008,ES2012,han2013numerical,GZ2013,HES2013,McClarren20102290}.

The discrete-ordinate (DO) method \cite{C1955,KKWV1995,LM2010}, also called the $S_N$ method,
is the most popular deterministic method for the RTE, owing to the good compromise among accuracy, flexibility,
and moderate computational requirements.
This method solves the radiative transfer equation along a discrete set of angular directions,
which are the nodal points of a numerical quadrature approximating the integral term on the unit sphere,
thus reducing the RTE to a semi-discretized first-order hyperbolic system.
To solve the semi-discretized hyperbolic system, it is natural to use the discontinuous
Galerkin (DG) discretization, leading to the so-called discrete-ordinate
discontinuous Galerkin method. In \cite{HHE2010}, a DODG method was proposed for the RTE, and
error estimates in certain discrete norms were obtained.
%However, the discontinuous Galerkin method is of the Galerkin type, which may suffer from the nonphysical oscillations in the numerical solutions, especially where discontinuities exist in the boundary data \cite{BMS2004}.

The object of this paper is to propose and investigate a discrete-ordinate discontinuous-streamline
diffusion method for solving the RTE. Such a method is a  combination of the
discrete-ordinate technique and the discontinuous-streamline diffusion (DSD) method.
The streamline diffusion (SD) finite element method was proposed by
Hughes et al. \cite{Hughes1986341} and Johnson et al. \cite{Johnson198199} in order to cope with the usual instabilities
caused by the convection term for the convection–dominated problem.
In \cite{AK2004,AK2013}, the streamline diffusion finite element method was analyzed  for the multi-dimensional Vlasov-FokkerPlanck system and  Fermi pencil beam equation.
The DSD method keeps the fundamental structure of the DG method while replacing the Galerkin elements by the SD framework in the upwind iteration procedure.
In \cite{SunTanWu1998}, the DSD method was employed successfully in solving first order hyperbolic problems,
where such a modification preserves the advantages of both the upwind approach and the DG method, and also
further improves the stability.
In this contribution, we seek to improve the DG method for RTE by employing the DSD scheme and derive
error estimates of the DODSD method in a norm including the directional gradient. While the DSD approach
has been developed and applied to hyperbolic systems or convection–dominated problems, this paper represents
the first attempt, to our knowledge, to construct DSD schemes for the RTE.  Our numerical results show
that the DODSD method can lead to a more accurate solution in comparison with the DODG method.

The rest of this paper is organized as follows. In Section \ref{sec:RTE}, we introduce the RTE
and recall a few basic related results. In Section \ref{sec:NM}, we derive the discrete-ordinate
discontinuous-streamline diffusion method, and in Section \ref{sec:NA} we present a stability
and convergence analysis for the proposed method. Numerical examples are presented in
Section \ref{sec:NE}, illustrating the performance of the numerical method and providing
numerical evidence of the theoretical error estimates. Finally, a few concluding remarks
are given in Section \ref{sec:conclusion}.

Throughout this paper, standard notation is used for Sobolev spaces, and the corresponding
semi-norms and norms \cite{book:C}. Moreover, the letter $C$ denotes a generic positive
constant whose value may be different at different occurrences.

\section{Radiative transfer equation}\label{sec:RTE}

Let $X$ be a bounded domain in $\mathbb{R}^d$ $(d=2,3)$ with a smooth boundary $\partial X$.
Denote by $\bm{n}(\bm{x})$ the unit outward normal for  $\bm{x}\in \partial X$.
Let  $\Omega$ be the angular space, i.e., the unit circle in $\mathbb{R}^2$, or the unit sphere
in $\mathbb{R}^3$. For each fixed direction $\bm{\omega}\in \Omega$, we introduce the
following subsets of $\partial X$:
\[ \partial X_{{\bm{\omega}},-}=\{\bm{x} \in{\partial X}\colon\bm{\omega}\cdot \bm{n}(x) <0 \},\quad
  \partial X_{{\bm{\omega}},+}=\{\bm{x} \in{\partial X}\colon\bm{\omega}\cdot \bm{n}(x) \ge 0 \}.\]
Then, we define
\[  \Gamma_-=\{(\bm{x},{\bm{\omega}})\colon \bm{x}\in \partial X_{{\bm{\omega}},-},
  {\bm{\omega}}\in \Omega\}, \quad \Gamma_+=\{(\bm{x},{\bm{\omega}})\colon
  \bm{x}\in\partial X_{{\bm{\omega}},+},{\bm{\omega}}\in\Omega\} \]
as the incoming and outgoing boundaries.

We define the integral operator $S$ by
\[ (Su)(\bm{x},{\bm{\omega}})=\int_\Omega g(\bm{x},{\bm{\omega}}\cdot
 \hat{{\bm{\omega}}}) u(\bm{x},\hat{{\bm{\omega}}})d\sigma(\hat{{\bm{\omega}}}),\]
where $g$ is a nonnegative normalized phase function satisfying
\begin{equation}\label{normilized}
  \int_\Omega g(\bm{x},{\bm{\omega}}\cdot\hat{{\bm{\omega}}}) d \sigma(\hat{{\bm{\omega}}}) = 1
\quad\forall\, \bm{x}\in X,\,{\bm{\omega}}\in \Omega.
\end{equation}
In most applications, the function $g$ is independent of $\bm{x}$. As an example,
a commonly used phase function is the following Henyey-Greenstein (H-G) function:
\begin{equation}\label{HG}
  g(t)=
  \left\{
  \begin{array}{ll}
    \frac{1-\eta^2}{2\pi(1+\eta^2-2\eta t)}\quad &d=2, \\
  \frac{1-\eta^2}{4\pi(1+\eta^2-2\eta t)^{3/2}}\quad &d=3 ,
  \end{array}
  \right.
\end{equation}
where the parameter $\eta\in (-1,1)$ is the anisotropy factor of the scattering medium. Note that
$\eta=0$ for isotropic scattering, $\eta>0$ for forward scattering, and $\eta <0$ for backward scattering.

With the above notation, a boundary value problem of the radiative transfer equation (RTE) reads
\begin{align}\label{eq:rte}
{\bm{\omega}}\cdot \nabla u(\bm{x},{\bm{\omega}})+\sigma_t(\bm{x})u(\bm{x},{\bm{\omega}})
&=\sigma_s(\bm{x})(Su)(\bm{x},{\bm{\omega}})+f(\bm{x},{\bm{\omega}}),
\quad (\bm{x},{\bm{\omega}})\in X\times \Omega,\\
u(\bm{x},{\bm{\omega}})&=0,\quad \hspace{4.3cm}(\bm{x},{\bm{\omega}})\in \Gamma_-.\label{eq:rtee}
\end{align}
Here $\sigma_t=\sigma_a+\sigma_s$, $\sigma_a$ is the macroscopic absorption cross section,
$\sigma_s$ is the macroscopic scattering cross section, and $f$ is a source function.
We assume these given functions have the properties that
\begin{eqnarray}\label{as:1}
  \sigma_t,\sigma_s\in L^\infty(X),\ \sigma_s \ge 0 \ \mbox{a.e.\ in}\ X, \
  \text{and}\ \sigma_t-\sigma_s \ge c_0\ \mbox{in}\ X\ \mbox{for a constant}\ c_0 >0,\\
  f(\bm{x},{\bm{\omega}})\in L^2(X\times \Omega)
  \mbox{ and is a continuous function with respect to }{\bm{\omega}}\in \Omega.
\end{eqnarray}

It is shown in \cite{agoshkov1998boundary} that the problem \eqref{eq:rte}--\eqref{eq:rtee}
has a unique solution $u\in H^1_2(X\times \Omega)$, where
\[  H^1_2(X\times\Omega):=\{v \in L^2(X\times \Omega)\colon
 {\bm{\omega}}\cdot \nabla v \in L^2(X\times \Omega)\} \]
with ${\bm{\omega}}\cdot \nabla v$ denoting the generalized directional derivative of $v$
in the direction ${\bm{\omega}}$.

\section{A discrete-ordinate discontinuous-streamline diffusion method}\label{sec:NM}

In this section, a discrete-ordinate discontinuous-streamline diffusion method  is presented
for solving the radiative transfer problem \eqref{eq:rte}--\eqref{eq:rtee}. The numerical
scheme is formed in two steps: %Following \cite{HHE2010},
First, we use the discrete-ordinate method to approximate the integral term in the RTE,
resulting in a system of linear hyperbolic partial differential equations. Then these coupled linear hyperbolic
equations are further discretized by the discontinuous-streamline diffusion method.

\subsection{Angular discretization}

To approximate the integration term $Su$, we employ a numerical quadrature of the form
\begin{equation}
\int_\Omega F(\bm{\omega})d \sigma(\bm{\omega})\approx \sum_{l=0}^Lw_l F(\bm{\omega}_l),
 \quad w_l>0,\ \bm{\omega}_l\in \Omega,\ 0\le l\le L, \label{quad0}
\end{equation}
where $F$ is a continuous function over the unit sphere $\Omega$.

\subsubsection{Quadrature scheme in the two-dimensional (2D) domain}
Introduce the spherical coordinate system
\begin{equation}\label{coor:new}
  {\bm{\omega}}=(\cos\theta,\sin\theta)^T,\quad   0\le \theta\le 2\pi.
\end{equation}
Noting that $d\sigma({\bm{\omega}})=d\theta$ holds for the coordinate system \eqref{coor:new},
we have
\[ \int_\Omega F({\bm{\omega}})d\sigma({\bm{\omega}})=\int_0^{2\pi}\bar{F}(\theta)\,d\theta,\]
where $\bar{F}$ stands for the representation of $F$ in the spherical coordinates.

One possible quadrature scheme for the above integral is the composite trapezoidal formula
\begin{eqnarray}\label{ctf}
  \int_0^{2\pi}\bar{F}(\theta)\,d\theta \approx \frac{h_\theta}{2}\left(\bar{F}(\theta_0)
 +\sum_{i=1}^{L-1}2\bar{F}(\theta_i)+\bar{F}(\theta_L)\right):=\sum_{i=0}^L w_i \bar{F}(\theta_i),
\end{eqnarray}
where $\{\theta_i\}$ are evenly spaced on $[0,2\pi]$ with a spacing $h_\theta=2\pi/L$, i.e.,
$\theta_i=ih_\theta$, $w_0=w_L=\frac{h_\theta}{2}$, and $w_i=h_\theta$ for $1\le i\le L-1$.
It is known that (see, e.g.~\cite{Atkinson1989})
\begin{equation}\label{neq:int}
  \int_0^{2\pi}\bar{F}(\theta)\,d\theta - \frac{h_\theta}{2}\left(\bar{F}(\theta_0)
  +\sum_{i=1}^{L-1}2\bar{F}(\theta_i)+\bar{F}(\theta_L)\right)
   =-\frac{\pi h_\theta^2}{6} \bar{F}''(\theta).
\end{equation}

\subsubsection{Quadrature scheme in the three-dimensional (3D) domain}
Introduce the spherical coordinate system
\begin{equation}\label{coor:new1}
{\bm{\omega}}=(\sin\theta\cos\psi,\sin\theta\sin\psi,\cos\theta)^T,\quad
  0\le \theta\le \pi,\ 0\le \psi\le 2\pi.
\end{equation}
Then we have $d\sigma({\bm{\omega}})=\sin\theta d\theta d\psi $.
By using the spherical coordinate system \eqref{coor:new1}, we obtain
\[  \int_\Omega F({\bm{\omega}})\,d\sigma({\bm{\omega}})
 =\int_0^{2\pi}\!\!\!\int_0^{\pi}\bar{F}(\theta,\psi)\sin\theta d\theta d\psi.\]

One family of quadratures for the above integral is given by the product numerical
integration formulas. For example,
\begin{equation} \label{eq:int_3d}
  \int_\Omega F(\bm{\omega})d \sigma(\bm{\omega})\approx
 \frac\pi m \sum_{j=1}^{2m}\sum_{i=1}^m \overline{w}_i\bar{F}(\theta_i,\psi_j),
\end{equation}
where $\{\theta_i\}$ are chosen so that $\{\cos\theta_i\}$ and $\{\overline{w}_i\}$ are the
Gauss-Legendre nodes and weights on $[-1,1]$. The points $\{\phi_j\}$ are evenly spaced
on $[0,2\pi]$ with a spacing of $\pi/m$.
Regarding the accuracy of the quadrature \eqref{eq:int_3d}, we have (see, e.g.~\cite{HesseSloan2006})
\begin{equation}\label{neq:int1}
  \left|\int_\Omega F(\bm{\omega})\,d\sigma(\bm{\omega}) - \sum_{l=0}^Lw_l F(\bm{\omega}_l)\right|
  \le c_s n^{-s}\|F\|_{s,\Omega} \quad \forall\, F\in H^s(\Omega),\ s>1,
\end{equation}
where $c_s$ is a positive constant depending only on $s$, and $n$ denotes the degree of
precision of the quadrature.

\subsubsection{Discrete-ordinate method}

Based on the numerical quadrature \eqref{quad0}, the integral operator $S$ is approximated
by a discretized operator $S_d$ given by
\begin{equation}
  S_du(\bm{x},{\bm{\omega}})=\sum_{i=0}^L w_i g(\bm{x},{\bm{\omega}}
 \cdot {\bm{\omega}}_i) u(\bm{x},{\bm{\omega}}_i).
\end{equation}

For later analysis, we define
\begin{equation}\label{def:m}
  m(\bm{x})=\max_{0\le l\le L}\sum_{i=0}^L w_i g(\bm{x},{\bm{\omega}}_l\cdot {\bm{\omega}}_i).
\end{equation}
In the 2D case, if $g(\bm{x},t)$ is continuous in $\bm{x}\in\overline{X}$ and twice
continuously differentiable with respect to $t\in [-1,1]$, then we get from \eqref{neq:int}
and \eqref{normilized} that
\begin{equation}
  \left|1-\sum_{i=0}^Lw_ig(\bm{x},{\bm{\omega}}_l\cdot{\bm{\omega}}_i)\right|\le O( h_\theta^2).
\end{equation}
This implies
\begin{equation}
  \|m(\bm{x})\|_{0,\infty,X}\le 1+O(h_\theta^2).
\end{equation}
Therefore, for $h_\theta$ sufficiently small, there exists a positive constant $c'_0 $ satisfying
\begin{equation}\label{eq:312}
  \sigma_t-m(\bm{x})\sigma_s\ge\sigma_t-\sigma_s-O(h_\theta^2)\sigma_s
\ge c_0-O(h_\theta^2)\sigma_s \ge c'_0\quad \forall \,\bm{x}\in X.
\end{equation}

In the 3D case, if $g(\bm{x},{\bm{\omega}}_l\cdot)$ is an $H^s(\Omega)$ $(s>1)$ function for
any fixed $\bm{x}\in X$ and ${\bm{\omega}}_l\in \Omega$, then we get from (\ref{neq:int1})
and (\ref{normilized}) that
\begin{equation}\label{eq:36}
  \left|1- \sum_{i=1}^Lw_l g({\bm{\omega}}_l\cdot {\bm{\omega}}_i)\right|
  \le c_s n^{-s}\|g({\bm{\omega}}_l\cdot)\|_{s,\Omega}.
\end{equation}
This also implies that $\|m(\bm{x})\|_{0,\infty,X}\approx 1$ and \eqref{eq:312} holds
in the 3D case when a high-order quadrature rule is used.

\begin{remark}
Numerical tests are provided in \cite{HHE2010} to demonstrate that \eqref{eq:36} holds
for the Henyey-Greenstein phase function \eqref{HG}.
\end{remark}

Using the operator $S_d$, we can discretize the radiative transfer equation
\eqref{eq:rte}--\eqref{eq:rtee} in each angular direction $\bm{\omega}_l$  to get
\begin{equation}\label{eq:DO}
{\bm{\omega}}_l\cdot \nabla u^l +\sigma_tu^l=\sigma_s\sum_{i=0}^L w_i g(\cdot,{\bm{\omega}}_l
\cdot{\bm{\omega}}_i)u^i+f_l\ {\quad\rm in}\ X, \quad u^l=0\ {\rm on}\ \partial_-^l X,\quad 0\le l\le L,
\end{equation}
where $f_l=f(\bm{x},{\bm{\omega}}_l)$ and $u^l=u^l(\bm{x})$ is an approximation of
$u(\bm{x},{\bm{\omega}}_l)$. Here and below, we use the simplified notation
$\partial_\pm^l X:=\partial X_{{\bm{\omega}}_l,\pm}$.

\begin{remark}
Note that the Henyey-Greenstein function \eqref{HG} is smooth for $\eta < 1$.
Formally, $\eta = 1$ corresponds to the case where there is no scattering
among different directions and $(Su)(\bm{x},{\bm{\omega}}) =u(\bm{x},{\bm{\omega}})$.
As a result, the system \eqref{eq:DO} is reduced to a set of uncoupled first order
transfer equations, which can be solved easily, and the analysis is the same
as that for a single transfer equation.
\end{remark}

\subsection{Spatial discretization}
After the angular discretization, the RTE is reduced to a  system of first-order hyperbolic
partial differential equations in space. Now we discretize \eqref{eq:DO} by the
discontinuous-streamline diffusion method.

Let $\{T_h\}_h$ be a regular family of finite element partitions of $X$, $h$ being the mesh size parameter.
Denote by $\bm{n}_K$ the unit outward normal to $\partial K$ for $K\in T_h$. Let $E_h^i$
be the set of all interior boundaries (faces for $d=3$ or edges for $d=2$) of $T_h$. For any positive integer $k$,
let $P_k(K)$ be the set of all polynomials on $K$ of a total degree no more than $k$.

For a fixed direction $\bm{\omega}_l$, we define the incoming and outgoing boundaries of $K\in T_h$ by
\[  \partial^l_- K=\{\bm{x}\in \partial K\colon {\bm{\omega}_l}\cdot \bm{n}(\bm{x})<0\},\quad
  \partial^l_+ K=\{\bm{x}\in \partial K\colon {\bm{\omega}_l}\cdot \bm{n}(\bm{x})\ge0\}.\]
We remark that each edge of  an element $K\in T_h$ is either an incoming boundary or an outgoing boundary.
%In general, for any $v\in V_h$, $w$ is discontinuous on $\partial K$.
%For a fixed direction $\bm{\omega}$,

Let $K^l_+$ and $K^l_-$ be two adjacent elements sharing  $e\in E_h^i$, where the
normal direction $\bm{n}_e^l$ pointing from $K^l_-$ to $K^l_+$ satisfies
$\bm{\omega}\cdot \bm{n}_e^l \ge 0$ (cf.\ Figure \ref{fig02}).
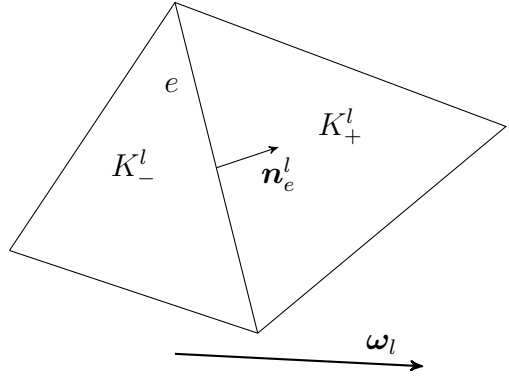
\begin{figure}[htbp]
\begin{center}%\vskip -5cm
  \begin{tikzpicture}[thin,scale=.55]
    \draw (0,0) coordinate (A)
        -- (4,6) coordinate (B)
        -- (12,3) coordinate (C)
        -- (6,-2) coordinate (D);
    \draw (B) -- (D);
    \draw (D) -- (A);
%    \draw (intersection cs:
%    first line={(A)--(C)},
%    second line={(B)--(D)}) coordinate (E) -- (10,3);
    \draw[->, >=stealth'] (5, 2) -- (6.5,2.5) node[below]{\large $\bm{n}^l_e$};
    \node at (3, 2) {\large $K^l_-$};
    \node at (8,3) {\large $K^l_+$};
    \node[right] at (3.5,4) {\large $e$};
    \draw[thick, ->, >=stealth'] (4,-2.5)--(10,-2.8);
    \node[below] at (9,-1.8){\large $\bm{\omega}_l$};
  \end{tikzpicture}
\end{center}
%\vskip -5cm
\caption{An example of $K^l_-$, $K^l_+$, and $n_e^l$  in 2D}\label{fig02}
\end{figure}
For a scalar-valued function $v$, we define
\[ v^l_+=v|_{K^l_+},\quad v^l_-=v|_{K^l_-}, \text{ and } [v^l]=v^l_+-v^l_-\ \text{ on } e.\]
For any domain $D\subseteq X$  with  boundary $\partial D$ (resp.\ $\partial_\pm^l D$) ,
let $(\cdot,\cdot)_D$ and $\langle\cdot,\cdot\rangle_{\partial D}$
(resp.\ $\langle \cdot,\cdot\rangle_{\partial^l_\pm D}$) be the $L^2$ inner product on $D$ and on
$\partial D$  (resp.\ $\partial^l_\pm D$).

Using the above notation, the DODG method, which  has been developed in \cite{HHE2010},
is to find $u^l_h \in P_k(K)$ such that for any $K\in T_h$, $0\le l \le L$,
\begin{align}\label{DODG}
&\left({\bm{\omega}}_l\cdot \nabla u^l_h +\sigma_tu_h^l,v_h^l\right)_K+\left<{ [u_h^l]},v^l_
+|{\bm{\omega}_l}\cdot \bm{n}|\right>_{\partial^l_- K}\\
&\qquad \qquad= \left(\sigma_s\sum_{i=0}^L w_i g(\cdot,{\bm{\omega}}_l\cdot{\bm{\omega}}_i)u_h^i
+f_l,v_h^l\right)_K\quad \forall\, v_h^l \in P_k(K)\nonumber
\end{align}
with
\begin{equation}\label{eq:3.21}
  u^l_-=0\quad{\rm on }\ \partial^l_- K\subset \partial_-^l X.
\end{equation}

We now replace the Galerkin elements in the above DODG formulation \eqref{DODG} by the SD framework,
and add an artificial diffusion term in the test function.
Then the discrete-ordinate discontinuous-streamline diffusion
(DODSD) method can be described as follows:
to find $u^l_h \in P_k(K)$ such that for any $K\in T_h$, $0\le l \le L$,
\begin{align}\label{DODSD}
&\left({\bm{\omega}}_l\cdot \nabla u^l_h +\sigma_tu_h^l,v_h^l+\delta\, {\bm{\omega}_l}
\cdot \nabla v_h^l\right)_K+\left<{ [u_h^l]},v^l_
+|{\bm{\omega}_l}\cdot \bm{n}|\right>_{\partial^l_- K}\\
&\qquad \qquad= \left(\sigma_s\sum_{i=0}^L w_i g(\cdot,{\bm{\omega}}_l\cdot{\bm{\omega}}_i)u_h^i
+f_l,v_h^l+\delta\, {\bm{\omega}_l}\cdot \nabla v_h^l\right)_K\quad \forall\, v_h^l \in P_k(K)\nonumber
\end{align}
with
\begin{equation}
  u^l_-=0\quad{\rm on }\ \partial^l_- K\subset \partial_-^l X.
\end{equation}
Here  $\delta=\bar{c}\,h$ is an artificial diffusion parameter with some $\bar{c}>0$ and
$v^l_\pm:=(v_h^l)_\pm$.

%\begin{remark}
Obviously, the DODG method is the special case of the DODSD method with $\delta = 0$.
The effect of adding the diffusion parameter will be analyzed in the next section, and illustrated
by some numerical results in Section \ref{sec:NR}.
%\end{remark}

\section{Error analysis}\label{sec:NA}

In order to analyze the proposed DODSD method, we first present the  global formulation of
the discrete method \eqref{DODSD}--\eqref{eq:3.21}.
Associated with a direction $\bm{\omega}_l$, we define
\begin{align}\label{space:41}
  V_h^l&=\{v\in L^2(X)\colon v|_K\in P_k(K)\ \forall\, K\in T_h\},\\
W_h^l&=\left\{w \in L^2(X)\colon w|_K \in C(K)\cap H^1(K)\ \forall\, K\in T_h\right\}.
\end{align}
Letting $\bm{V}_h=\left(V_h^l\right)^{L+1}$ and $\bm{W}_h:=\left(W_h^l\right)^{L+1}$,
we have $\bm{V}_h\subset\bm{W}_h$. A generic element in $\bm{V}_h$ will be denoted by
$\bm{v}_h:=\{v_h^l\}_{l=0}^L$ or simply $\bm{v}_h:=\{v_h^l\}$.

The global formulation of the DODSD method \eqref{DODSD}--\eqref{eq:3.21} is then expressed as:
Find $\{u^l_h\}\in \bm{V}_h$ such that
\begin{align}\label{DODSD_Global}
 & \sum_{l=0}^Lw_l\sum_{K\in T_h}\left({\bm{\omega}}_l\cdot \nabla u_h^l
+\sigma_tu_h^l,v_h^l+\delta\, {\bm{\omega}_l}\cdot \nabla v_h^l\right)_K
+\sum_{l=0}^Lw_l\sum_{K\in T_h}\left<{ [u^l_h]},
v^l_+|{\bm{\omega}_l}\cdot \bm{n}|\right>_{\partial^l_- K}\\
&\qquad =  \sum_{l=0}^Lw_l\sum_{K\in T_h}\left(\sigma_s\sum_{i=0}^L
w_i g(\cdot,{\bm{\omega}}_l\cdot{\bm{\omega}}_i)u^i_h+f_l,v_h^l
+\delta\, {\bm{\omega}_l}\cdot \nabla v_h^l\right)_K\quad\forall \{v_h^l\}\in \bm{V}_h\nonumber
\end{align}
with
\begin{equation}\label{uboundary}
u^l_-=0\quad \mbox{on}\ \partial^l_- K \subset \partial^l_- X,\ 0\le l\le L.
\end{equation}

We define a bilinear form $a_h:\bm{W}_h\times \bm{W}_h\rightarrow \mathbb{R}$ as
\begin{align*}
  a_h(\bm{u}_h,\bm{v}_h)&=\sum_{l=0}^Lw_l\sum_{K\in T_h}\left({\bm{\omega}}_l\cdot \nabla u_h^l
  +\sigma_tu_h^l,v_h^l+\delta {\bm{\omega}_l}\cdot \nabla v_h^l\right)_K\\
&{}\quad+\sum_{l=0}^Lw_l\sum_{K\in T_h}\left<{ [u^l_h]},v^l_+|{\bm{\omega}_l}
 \cdot \bm{n}|\right>_{\partial^l_- K}\\
&{}\quad -  \sum_{l=0}^Lw_l\sum_{K\in T_h}\left(\sigma_s\sum_{i=0}^L w_i g(\cdot,
{\bm{\omega}}_l\cdot{\bm{\omega}}_i)u_h^i,v_h^l+\delta \,{\bm{\omega}_l}\cdot \nabla v_h^l\right)_K
\end{align*}
and a linear form $f:\bm{W}_h\rightarrow \mathbb{R}$ by
\[ f(\bm{v}_h)=\sum_{l=0}^Lw_l\sum_{K\in T_h}\left(f_l,v_h^l+\delta\, {\bm{\omega}_l}
\cdot \nabla v_h^l\right)_K.\]

Then we rewrite the DODSD method for the problem \eqref{eq:rte}--\eqref{eq:rtee}:
Find $\bm{u_h} \in \bm{V}_h$ such that
\begin{equation}\label{DODSD:global2b}
  a_h(\bm{u}_h,\bm{v}_h) = f(\bm{v}_h)\quad \forall\, \bm{v}_h \in \bm{V}_h,
\end{equation}
with
\begin{equation}\label{DODSD:global2e}
[u^l_h]=u^l_+\quad \mbox{on}\ \partial^l_- K \subset \partial^l_- X,\ 0\le l\le L.
\end{equation}

\subsection{Stability and unique solvability}

We begin with a useful lemma.

\begin{lem}\label{lem:31}
  For any $\bm{v}_h=\{v^l_h\}$,  $\bm{w}_h=\{w^l_h\}\in \left(L^2(\Omega)\right)^{L+1}$, we have
\begin{align*}
& \sum_{l=0}^Lw_l\left(\sigma_s\sum_{i=0}^L w_i g(\cdot,{\bm{\omega}}_l
\cdot{\bm{\omega}}_i)v^i_h,w^l_h\right)_X\\
&\qquad \le \left[\sum_{l=0}^Lw_l\left(m\sigma_sv^l_h,v^l_h\right)_X\right]^\frac12
    \left[\sum_{l=0}^Lw_l\left(m\sigma_sw^l_h,w^l_h\right)_X\right]^\frac12.
\end{align*}
\end{lem}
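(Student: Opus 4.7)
The plan is to prove this estimate by a double application of Cauchy--Schwarz: first pointwise on the angular double sum, then on the spatial $L^2$ integral. The key algebraic observation is that the symmetry $g(\bm{x},\bm{\omega}_l\cdot\bm{\omega}_i)=g(\bm{x},\bm{\omega}_i\cdot\bm{\omega}_l)$ together with the very definition of $m(\bm{x})$ controls both marginal sums of the kernel $w_l w_i\, g(\bm{x},\bm{\omega}_l\cdot\bm{\omega}_i)$ by $m(\bm{x})$ times a single weight.

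First, for each fixed $\bm{x}\in X$, I would view the left-hand integrand as a bilinear form in the discrete angular indices:
\[
\sum_{l=0}^L\sum_{i=0}^L w_l w_i\, g(\bm{x},\bm{\omega}_l\cdot\bm{\omega}_i)\,\sigma_s(\bm{x})\,v_h^i(\bm{x})\,w_h^l(\bm{x}).
\]
Since $g\ge 0$ and $\sigma_s\ge 0$, the discrete Cauchy--Schwarz inequality applied to the measure with weights $w_l w_i g(\bm{x},\bm{\omega}_l\cdot\bm{\omega}_i)\sigma_s(\bm{x})$ bounds this by
\[
\Bigl(\sum_{l,i} w_l w_i g(\bm{x},\bm{\omega}_l\cdot\bm{\omega}_i)\sigma_s(\bm{x})(v_h^i)^2\Bigr)^{\!1/2}
\Bigl(\sum_{l,i} w_l w_i g(\bm{x},\bm{\omega}_l\cdot\bm{\omega}_i)\sigma_s(\bm{x})(w_h^l)^2\Bigr)^{\!1/2}.
\]

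Next, I would simplify each factor by executing the inner sum that does not involve the squared function. In the first factor, summing in $l$ first and using the symmetry of the dot product gives $\sum_{l}w_l g(\bm{x},\bm{\omega}_l\cdot\bm{\omega}_i)=\sum_{l}w_l g(\bm{x},\bm{\omega}_i\cdot\bm{\omega}_l)\le m(\bm{x})$ by definition \eqref{def:m}. In the second factor, summing in $i$ first directly yields $\sum_i w_i g(\bm{x},\bm{\omega}_l\cdot\bm{\omega}_i)\le m(\bm{x})$. Thus the pointwise estimate reduces to
\[
\Bigl(\sum_{i} w_i m(\bm{x})\sigma_s(\bm{x})(v_h^i(\bm{x}))^2\Bigr)^{\!1/2}
\Bigl(\sum_{l} w_l m(\bm{x})\sigma_s(\bm{x})(w_h^l(\bm{x}))^2\Bigr)^{\!1/2}.
\]

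Finally, I would integrate over $X$ and apply the ordinary Cauchy--Schwarz inequality in $L^2(X)$ to split the spatial integral of the product of the two square roots. Interchanging the order of the sums over $l$ (resp.\ $i$) with the spatial integration then rewrites each factor as $\bigl[\sum_l w_l(m\sigma_s v_h^l,v_h^l)_X\bigr]^{1/2}$, giving exactly the claimed bound. The only point that requires a small amount of care is the bookkeeping in step two, namely checking that both marginals of the symmetric kernel are controlled by $m(\bm{x})$; beyond that the argument is routine.
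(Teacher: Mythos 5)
Your proof is correct and is essentially the paper's own argument: both are the kernel-weighted double Cauchy--Schwarz (Schur-test) estimate, using the nonnegativity of $w_lw_ig\sigma_s$ as the measure and bounding the two marginal sums $\sum_i w_i g(\cdot,\bm{\omega}_l\cdot\bm{\omega}_i)$ and, via the symmetry of the dot product, $\sum_l w_l g(\cdot,\bm{\omega}_l\cdot\bm{\omega}_i)$ by $m(\bm{x})$ from definition \eqref{def:m}. The only difference is bookkeeping --- you apply Cauchy--Schwarz once pointwise over the double angular index and then once in $L^2(X)$, whereas the paper nests two applications (first over $l$ jointly with the spatial integral, then over $i$) --- and your explicit mention of the symmetry step makes precise a point the paper uses tacitly.
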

\begin{proof}
Interchanging  the order of summation, we have
\[  I:=\sum_{l=0}^Lw_l\left(\sigma_s\sum_{i=0}^L w_i g(\cdot,{\bm{\omega}}_l
\cdot{\bm{\omega}}_i)v_h^i,w_h^l\right)_X
=\sum_{i=0}^Lw_i\sum_{l=0}^L\left(\sigma_s w_l g(\cdot,{\bm{\omega}}_l
\cdot{\bm{\omega}}_i)v_h^i,w_h^l\right)_X. \]
Using the Cauchy-Schwarz inequality, we get
\begin{equation}\label{eq:lem411}
I\le \sum_{i=0}^Lw_i  \left[\sum_{l=0}^L\left(\sigma_s w_l g(\cdot,{\bm{\omega}}_l
\cdot{\bm{\omega}}_i)v_h^i,v_h^i\right)_X\right]^\frac12  \left[\sum_{l=0}^L\left(\sigma_s w_l
 g(\cdot,{\bm{\omega}}_l\cdot{\bm{\omega}}_i)w_h^l,w_h^l\right)_X\right]^\frac12.
\end{equation}
It follows from the definition \eqref{def:m} that
\[ \sum_{l=0}^L\left(\sigma_s w_l g(\cdot,{\bm{\omega}}_l\cdot{\bm{\omega}}_i)v_h^i,v_h^i\right)_X
  \le  \left(m\sigma_sv_h^i,v_h^i\right)_X. \]
Therefore, a combination of the inequality \eqref{eq:lem411} and the Cauchy-Schwarz inequality leads to
\begin{align*}
  I&\le    \sum_{i=0}^Lw_i
    \left[\left(m\sigma_sv_h^i,v_h^i\right)_X\right]^\frac12
    \left[\sum_{l=0}^L\left(\sigma_s w_l g(\cdot,{\bm{\omega}}_l
    \cdot{\bm{\omega}}_i)w_h^l,w_h^l\right)_X\right]^\frac12\\
    &\le
    \left[\sum_{i=0}^Lw_i\left(m\sigma_sv_h^i,v_h^i\right)_X\right]^\frac12
    \left[\sum_{i=0}^Lw_i\sum_{l=0}^L\left(\sigma_s w_l g(\cdot,{\bm{\omega}}_l
    \cdot{\bm{\omega}}_i)w_h^l,w_h^l\right)_X\right]^\frac12\nonumber\\
    %&= &
    %\left[\sum_{i=0}^Lw_i\left(m\sigma_sv_h^i,v_h^i\right)_X\right]^\frac12
    %\left[\sum_{l=0}^Lw_l\sum_{i=0}^L\left(\sigma_s w_i g(\bm{x},{\bm{\omega}}_l\cdot{\bm{\omega}}_i)w_h^l,w_h^l\right)_X\right]^\frac12\nonumber\\
    &\le
    \left[\sum_{i=0}^Lw_i\left(m\sigma_sv_h^i,v_h^l\right)_X\right]^\frac12
    \left[\sum_{l=0}^Lw_l\left(m\sigma_sw_h^i,w_h^l\right)_X\right]^\frac12,
\end{align*}
which completes the proof of the lemma.
\end{proof}

For any $\bm{v}_h\in\bm{W}_h$, we define a norm $|||\cdot|||$ by
\begin{align*}
|||\bm{v}_h|||^2&=\sum_{l=0}^Lw_l\sum_{K\in T_h}c_0'\|v^l_h\|_{0,K}^2
  +\sum_{l=0}^Lw_l\sum_{\partial^l_+ K\subset  { \partial^l_+ X}}
 \left<v^l_-,v^l_-\bm{\omega}_l\cdot\bm{n}\right>_{\partial^l_+ K}\\
  &{}\quad+\delta\sum_{l=0}^Lw_l\sum_{K\in T_h}\|\bm{\omega}_l\cdot\nabla v_h^l\|_{0,K}^2
+\sum_{l=0}^Lw_l\sum_{K\in T_h}\left<[v^l_h],[v^l_h]
|\bm{\omega}_l\cdot \bm{n}|\right>_{\partial^l_- K}.
\end{align*}
We remark that $[v^l_h]= v^l_+$ on $\partial^l_- K\subset \partial^l_- X$, $l=0,\cdots,L$.

Then we prove a stability estimate for the method (\ref{DODSD:global2b})--(\ref{DODSD:global2e}).

\begin{lem}\label{lem:32}
  For sufficiently small $h$, we have
\[ |||\bm{v}_h|||^2\le 3a_h(\bm{v}_h,\bm{v}_h)\quad \forall \,\bm{v}_h\in \bm{W}_h. \]
\end{lem}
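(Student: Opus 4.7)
The plan is to take $\bm u_h = \bm v_h$ in $a_h(\cdot,\cdot)$ and show that each of the four constituent pieces of $|||\bm v_h|||^2$ is controlled from below by a corresponding contribution from $a_h(\bm v_h,\bm v_h)$. I would split $a_h(\bm v_h,\bm v_h)$ into four groups: (i) the transport term $(\bm\omega_l\cdot\nabla v_h^l, v_h^l)_K$ together with the upwind jump penalty $\langle [v_h^l], v_+^l |\bm\omega_l\cdot\bm n|\rangle_{\partial^l_- K}$; (ii) the absorption term $(\sigma_t v_h^l, v_h^l)_K$ combined with the negative scattering term $-(\sigma_s \sum_i w_i g\, v_h^i, v_h^l)_X$; (iii) the streamline-diffusion square $\delta\|\bm\omega_l\cdot\nabla v_h^l\|_{0,K}^2$; and (iv) the two cross terms $\delta(\sigma_t v_h^l, \bm\omega_l\cdot\nabla v_h^l)_K$ and $-\delta(\sigma_s \sum_i w_i g\, v_h^i, \bm\omega_l\cdot\nabla v_h^l)_K$ produced by the artificial diffusion in the test function. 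Group~(iv) is where all the work lies and is the source of the smallness condition on $h$.

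For group~(i), I would apply element-wise integration by parts, $(\bm\omega_l\cdot\nabla v_h^l, v_h^l)_K = \tfrac12\int_{\partial K} (v_h^l)^2 \bm\omega_l\cdot\bm n\,ds$. Summing over $K$ and pairing each interior face (with downwind $K^l_+$ and upwind $K^l_-$) against its jump contribution, the algebra $\tfrac12[(v_-^l)^2-(v_+^l)^2] + [v_h^l]\,v_+^l = \tfrac12[v_h^l]^2$ collapses the interior-face terms to $\tfrac12[v_h^l]^2|\bm\omega_l\cdot\bm n|$; the same computation on $\partial^l_- X$, using the convention $[v_h^l]=v_+^l$, gives $\tfrac12[v_h^l]^2|\bm\omega_l\cdot\bm n|$; and on $\partial^l_+ X$ the transport integral alone delivers $\tfrac12(v_-^l)^2|\bm\omega_l\cdot\bm n|$. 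So group~(i) reproduces exactly one half of the jump-plus-outflow pieces of $|||\bm v_h|||^2$. For group~(ii), Lemma~\ref{lem:31} with $\bm w_h = \bm v_h$ yields $\sum_l w_l(\sigma_s\sum_i w_i g\,v_h^i, v_h^l)_X \le \sum_l w_l(m\sigma_s v_h^l, v_h^l)_X$, and the coercivity~\eqref{eq:312} then gives the lower bound $c_0'\sum_l w_l\|v_h^l\|_{0,X}^2$. Group~(iii) matches the SD piece of the norm exactly.

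The main obstacle is group~(iv). I would bound the first cross term by Cauchy--Schwarz and Young's inequality with parameter $\tfrac12$,
\[
\delta\bigl|(\sigma_t v_h^l, \bm\omega_l\cdot\nabla v_h^l)_K\bigr| \le \delta\|\sigma_t\|_{0,\infty,X}^2\|v_h^l\|_{0,K}^2 + \tfrac{\delta}{4}\|\bm\omega_l\cdot\nabla v_h^l\|_{0,K}^2,
\]
and the second by first applying Lemma~\ref{lem:31} with the auxiliary vector $w_h^l = \bm\omega_l\cdot\nabla v_h^l$ (using $\|m\sigma_s\|_{0,\infty,X}\le C$) and then the same Young's inequality, producing a bound of identical shape with $\|\sigma_t\|_{0,\infty,X}^2$ replaced by a constant. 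Summing in $l$ and $K$ and using $\delta = \bar c h$, the total withdrawal from groups~(ii) and~(iii) is at most $Ch\sum_l w_l\|v_h^l\|_{0,X}^2 + \tfrac{\delta}{2}\sum_l w_l\sum_K\|\bm\omega_l\cdot\nabla v_h^l\|_{0,K}^2$. Choosing $h$ small enough that $Ch \le \tfrac{2c_0'}{3}$ leaves $\tfrac{c_0'}{3}\sum_l w_l\|v_h^l\|_{0,X}^2$ surviving from group~(ii) and $\tfrac{\delta}{2}\sum_l w_l\sum_K\|\bm\omega_l\cdot\nabla v_h^l\|_{0,K}^2$ surviving from group~(iii). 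Since $\tfrac12$, $\tfrac{c_0'}{3}$, and $\tfrac{\delta}{2}$ are each at least one third of the coefficients $1$, $c_0'$, $\delta$ appearing in $|||\cdot|||^2$, the inequality $|||\bm v_h|||^2 \le 3 a_h(\bm v_h,\bm v_h)$ follows. The delicate point is splitting the Young weights between the two cross terms so that the total diffusion withdrawal stays at $\tfrac{\delta}{2}$; choosing the equal split $\tfrac12=\tfrac14+\tfrac14$ is the cleanest way to keep the constant on the right-hand side as small as $3$.
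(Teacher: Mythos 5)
Your proposal is correct and takes essentially the same route as the paper's proof: elementwise integration by parts turning the transport term into boundary contributions that combine with the upwind penalty to yield one half of the jump and outflow parts of the norm, Lemma \ref{lem:31} applied twice (once with $\bm{w}_h=\bm{v}_h$, once with $w_h^l=\delta\,\bm{\omega}_l\cdot\nabla v_h^l$) to control the scattering terms, and Cauchy--Schwarz/Young plus \eqref{eq:312} and the smallness of $\delta=\bar{c}\,h$ to absorb the two cross terms. The only difference is bookkeeping of the Young weights --- you withdraw $\tfrac{\delta}{4}+\tfrac{\delta}{4}$ from the streamline-diffusion term where the paper withdraws $\tfrac12\delta$ from $I_1$ and uses the $\tfrac12\delta^{2/3}/\tfrac12\delta^{4/3}$ split for $I_4$ --- and both choices leave at least $\tfrac{\delta}{3}$ on the gradient term for $h$ sufficiently small.
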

\begin{proof}
Noting that $\bm{\omega}_l$ is a constant vector, we get from  the Green formula that
 \begin{equation}\label{eq:3.25}
    \left(\bm{\omega}_l\cdot \nabla v_h,v_h\right)_K=-\left(v_h,\bm{\omega}_l
\cdot \nabla v_h\right)_K+\left<v_h,v_h\bm{\omega}_l\cdot \bm{n}\right>_{\partial K}
=\frac12\left<v_h,v_h\bm{\omega}_l\cdot \bm{n}\right>_{\partial K}.
 \end{equation}
Thus,
\[ a_h(\bm{v}_h,\bm{v}_h)=\sum_{l=0}^Lw_l\sum_{K\in T_h}\delta\,{ \|{\bm{\omega}}_l
 \cdot\nabla v_h^l\|_{0,K}^2}
  +\sum_{l=0}^Lw_l\sum_{K\in T_h}\left(\sigma_tv_h^l,v_h^l\right)_{K}+I_1+I_2+I_3+I_4,\]
where
\begin{align*}
I_1&=\sum_{l=0}^Lw_l\sum_{K\in T_h}\left(\delta\,\sigma_tv_h^l,\bm{\omega}_l\cdot\nabla v_h^l\right)_K,\\
I_2&=\sum_{l=0}^Lw_l\sum_{K\in T_h}\left(\frac{1}{2}
   \left<v_h^l,v_h^l\bm{\omega}_l\cdot\bm{n}\right>_{\partial K}
 +\left<{ [v_h^l]},v_+^l|{\bm{\omega}_l}\cdot \bm{n}|\right>_{\partial^l_- K}\right),\\
I_3&=-\sum_{l=0}^Lw_l\sum_{K\in T_h}\left(\sigma_s\sum_{i=0}^L w_i g(\cdot,{\bm{\omega}}_l
   \cdot{\bm{\omega}}_i)v_h^i,v_h^l\right)_K,\\
I_4&=-\sum_{l=0}^Lw_l\sum_{K\in T_h}\left(\sigma_s\sum_{i=0}^L w_i g(\cdot,{\bm{\omega}}_l
  \cdot{\bm{\omega}}_i)v_h^i,\delta {\bm{\omega}_l}\cdot \nabla v_h^l\right)_K.
\end{align*}

By the Cauchy-Schwarz inequality, we get
\begin{align*}
\left|I_1\right|
&\le\left[\sum_{l=0}^Lw_l\sum_{K\in T_h}\left(\delta\,\sigma_tv_h^l,\sigma_tv_h^l\right)_K\right]^\frac12
   \left[\sum_{l=0}^Lw_l\sum_{K\in T_h}\left(\delta\,\bm{\omega}_l\cdot\nabla v_h^l,
    \bm{\omega}_l\cdot\nabla v_h^l\right)_K\right]^\frac12\\
&\le  \frac12\delta\sum_{l=0}^Lw_l\sum_{K\in T_h}\left(\sigma_tv_h^l,\sigma_tv_h^l\right)_K
   +\frac12\sum_{l=0}^Lw_l\sum_{K\in T_h}\left(\delta\,\bm{\omega}_l\cdot\nabla v_h^l,
  \bm{\omega}_l\cdot\nabla v_h^l\right)_K.
\end{align*}

A simple calculation yields
\begin{align*}
I_2 & =\sum_{l=0}^Lw_l\sum_{K\in T_h}\left(
-\frac{1}{2}\left<v^l_+,v^l_+\left|\bm{\omega}_l\cdot\bm{n}\right|\right>_{\partial^l_- K}
+\frac{1}{2}\left<v^l_-,v^l_-\bm{\omega}_l\cdot\bm{n}\right>_{\partial^l_+ K}
+\left<[v_h^l],v^l_+|{\bm{\omega}}\cdot \bm{n}|\right>_{\partial^l_- K}\right)\\
& =\sum_{l=0}^Lw_l\sum_{K\in T_h}\left(
-\frac{1}{2}\left<v^l_+,v^l_+\left|\bm{\omega}_l\cdot\bm{n}\right|\right>_{\partial^l_- K}
+\frac12\left<v^l_-,v^l_-\left|\bm{\omega}_l\cdot\bm{n}\right|\right>_{\partial^l_- K}
+\left<[v_h^l],v^l_+|{\bm{\omega}}\cdot \bm{n}|\right>_{\partial^l_- K}\right)\\
&\quad{}+\sum_{l=0}^Lw_l\sum_{\partial^l_+ K\subset  { \partial^l_+ X}}\frac12\left<v^l_-,
v^l_-\left|\bm{\omega}_l\cdot\bm{n}\right|\right>_{\partial^l_+ K}\\
&=\sum_{l=0}^Lw_l\sum_{K\in T_h}\left(
\frac{1}{2}\left<[v_h^l],[v_h^l]\left|\bm{\omega}_l\cdot\bm{n}\right|\right>_{\partial^l_- K}\right)
+\sum_{l=0}^Lw_l\sum_{\partial^l_+ K\subset  { \partial^l_+ X}}\frac12\left<v^l_-,
v^l_-\left|\bm{\omega}_l\cdot\bm{n}\right|\right>_{\partial^l_+ K},
\end{align*}
where the condition that $v^l_-=0$ on $\partial^l_- K\subset \partial^l_- X$ is used.

Using Lemma \ref{lem:31}, we get
\[   \left| I_3\right|\le\sum_{l=0}^Lw_l\sum_{K\in T_h}\left(m\sigma_s v_h^l,v_h^l\right)_K\]
and
\begin{align*}
\left|I_4\right|
&\le \left[ \sum_{l=0}^Lw_l\sum_{K\in T_h}\left(m\sigma_s v_h^l,v_h^l\right)_K\right]^\frac12
 \left[ \sum_{l=0}^Lw_l\sum_{K\in T_h}\left(m\sigma_s \delta^2\bm{\omega}_l\cdot\nabla v_h^l,
  \bm{\omega}_l\cdot\nabla v_h^l\right)_K\right]^\frac12\\
&\le \frac12\delta^\frac23\sum_{l=0}^Lw_l\sum_{K\in T_h}\left(m\sigma_s v_h^l,v_h^l\right)_K
+\frac12\delta^\frac43 \sum_{l=0}^Lw_l\sum_{K\in T_h}\left(m\sigma_s \bm{\omega}_l
\cdot\nabla v_h^l,\bm{\omega}_l\cdot\nabla v_h^l\right)_K.
\end{align*}

Combining the above inequalities, we have
\begin{align*}
a_h(\bm{v}_h,\bm{v}_h)
&\ge\sum_{l=0}^Lw_l\sum_{K\in T_h}\left(\frac12\delta\left( 1-\delta^\frac13m\sigma_s\right)
{\bm{\omega}}_l\cdot \nabla v_h^l,{\bm{\omega}}_l\cdot \nabla v_h^l\right)_{K}\\
&{}\quad+\sum_{l=0}^Lw_l\sum_{K\in T_h}\left(\left(\sigma_t-\frac12\delta\,\sigma_t^2
  -(1+\delta^\frac23)m\sigma_s\right)v_h^l,v_h^l\right)_{K}\\
&{}\quad +\sum_{l=0}^Lw_l\sum_{K\in T_h}\left(\frac{1}{2}
  \left<[v_h^l],[v_h^l]\left|\bm{\omega}_l\cdot\bm{n}\right|\right>_{\partial ^l_-K}\right)
   +\sum_{l=0}^Lw_l\sum_{\partial^l_+ K\in { \partial^l_+ X}}\frac12\left<v^l_-,
  v^l_-\left|\bm{\omega}_l\cdot\bm{n}\right|\right>_{\partial^l_+ K}.
\end{align*}
Then the lemma can be obtained by taking a sufficiently small $h$.
\end{proof}

The unique solvability of the method \eqref{DODSD:global2b}--\eqref{DODSD:global2e}
is a direct consequence of the above lemma.

\begin{thm}
  For sufficiently small $h$, the DODSD method \eqref{DODSD_Global} has a unique  solution.
\end{thm}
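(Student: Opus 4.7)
The plan is to reduce the existence-uniqueness question to a pure uniqueness question and then apply Lemma \ref{lem:32} directly. Since $\bm{V}_h\subset\bm{W}_h$ is finite-dimensional and the DODSD scheme \eqref{DODSD_Global}--\eqref{uboundary} is equivalent to the variational formulation \eqref{DODSD:global2b}--\eqref{DODSD:global2e}, which in turn is a square linear system for the coefficients of $\bm{u}_h$, it suffices to prove that the associated homogeneous problem admits only the trivial solution. Existence then follows by the standard rank-nullity argument for square linear systems.

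To execute this, I would first observe that the boundary condition \eqref{DODSD:global2e} is itself linear: on each $\partial^l_- K\subset\partial^l_- X$ it imposes $u^l_+ = 0$ (since $[u^l_h]=u^l_+-u^l_-$ and $u^l_-$ is prescribed to $0$ on $\partial^l_- X$). Consequently, if $\bm{u}_h^{(1)}$ and $\bm{u}_h^{(2)}$ are two solutions of \eqref{DODSD:global2b}--\eqref{DODSD:global2e}, their difference $\bm{w}_h:=\bm{u}_h^{(1)}-\bm{u}_h^{(2)}\in\bm{V}_h$ satisfies the homogeneous inflow condition $w^l_-=0$ on $\partial^l_- K\subset\partial^l_- X$ required in the hypothesis of Lemma \ref{lem:32}, as well as
\begin{equation*}
a_h(\bm{w}_h,\bm{v}_h)=0\quad\forall\,\bm{v}_h\in\bm{V}_h,
\end{equation*}
by the bilinearity of $a_h$ and linearity of $f$.

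The crucial step is then to test with $\bm{v}_h=\bm{w}_h\in\bm{V}_h\subset\bm{W}_h$ and invoke Lemma \ref{lem:32} to conclude
\begin{equation*}
|||\bm{w}_h|||^2\le 3\,a_h(\bm{w}_h,\bm{w}_h)=0,
\end{equation*}
provided $h$ is sufficiently small so that Lemma \ref{lem:32} applies. Because the triple-norm contains the term $\sum_{l,K} c_0'\|w^l_h\|_{0,K}^2$ with $c_0'>0$ (from \eqref{eq:312}), the vanishing of $|||\bm{w}_h|||$ forces $w^l_h\equiv 0$ on every $K\in T_h$ and every $l=0,\dots,L$. Hence $\bm{u}_h^{(1)}=\bm{u}_h^{(2)}$, giving uniqueness, and therefore also existence.

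The only genuine subtlety I anticipate is verifying that Lemma \ref{lem:32} is indeed applicable to the difference $\bm{w}_h$: one must check that the boundary contribution on $\partial^l_- X$ that appears implicitly in the computation of $a_h(\bm{w}_h,\bm{w}_h)$ is still handled correctly, which is exactly ensured by the homogeneous inflow condition derived above. Once this is noted, the remainder of the argument is a one-line application of the stability estimate, and the threshold on $h$ is inherited verbatim from Lemma \ref{lem:32}.
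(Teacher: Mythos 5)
Your proposal is correct and is exactly the argument the paper intends: the paper states the theorem as a ``direct consequence'' of Lemma \ref{lem:32}, and you have simply spelled out the standard reduction — a square linear system in a finite-dimensional space needs only uniqueness, and the difference of two solutions satisfies the homogeneous inflow condition, so testing it against itself and applying the stability estimate $|||\bm{w}_h|||^2\le 3\,a_h(\bm{w}_h,\bm{w}_h)=0$ together with the positivity of the $c_0'\|\cdot\|_{0,K}^2$ terms forces $\bm{w}_h=0$. No gaps; your care in checking that the homogeneous boundary condition makes Lemma \ref{lem:32} applicable is precisely the point the paper leaves implicit.
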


\subsection{Error estimate}

For any $K\in T_h$, let $P_K$ be the orthogonal projection operator from $L^2(K)$ onto $P_k(K)$.
Then by the scaling argument and the trace theorem we can easily obtain the following result
(cf.~\cite{book:BS2008}).

\begin{lem}\label{lem:34}
For all $v\in H^{1+r}(K)$ with $r>0$ and $K\in T_h$, we have
\[   \|v-P_Kv\|_{0,K}+h_K\|v-P_Kv\|_{1,K}+h^\frac12_K\|v-P_Kv\|_{0,\partial K}
  \le C h_K^{1+\min\{r,k\}}\|v\|_{r+1,K}.\]
\end{lem}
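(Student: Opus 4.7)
The plan is the standard reference-element scaling argument. I would proceed in three stages: (i) pull the estimate back to a fixed reference element $\hat K$ via an affine diffeomorphism, (ii) establish the inequality on $\hat K$ using the Bramble--Hilbert lemma and a trace inequality, and (iii) scale the result back to the physical element $K$.

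For stage (i), the shape regularity of $\{T_h\}_h$ lets me fix a reference element $\hat K$ and, for each $K\in T_h$, an affine diffeomorphism $F_K:\hat K\to K$, and set $\hat v(\hat x):=v(F_K\hat x)$. The standard change-of-variable computations yield $\|v\|_{0,K}\sim h_K^{d/2}\|\hat v\|_{0,\hat K}$, $|v|_{j,K}\sim h_K^{d/2-j}|\hat v|_{j,\hat K}$ for $j\ge 0$, and $\|v\|_{0,\partial K}\sim h_K^{(d-1)/2}\|\hat v\|_{0,\partial\hat K}$, with hidden constants depending only on the shape regularity and $k$. A direct change of variables further shows that the $L^2$ orthogonal projection is affine invariant, $\widehat{P_Kv}=\hat P\hat v$, where $\hat P:L^2(\hat K)\to P_k(\hat K)$ is the $L^2$ projection on the reference element.

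For stage (ii), set $s:=\min\{r,k\}$. Since $\hat P$ is the identity on $P_k(\hat K)\supseteq P_s(\hat K)$, the operator $I-\hat P$ annihilates $P_s(\hat K)$. The Bramble--Hilbert lemma applied on the fixed domain $\hat K$ then delivers
\[
\|\hat v-\hat P\hat v\|_{0,\hat K}+|\hat v-\hat P\hat v|_{1,\hat K}\le C\,|\hat v|_{1+s,\hat K},
\]
and the trace theorem on $\hat K$ combined with this bound gives
\[
\|\hat v-\hat P\hat v\|_{0,\partial\hat K}\le C\bigl(\|\hat v-\hat P\hat v\|_{0,\hat K}+|\hat v-\hat P\hat v|_{1,\hat K}\bigr)\le C\,|\hat v|_{1+s,\hat K}.
\]

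Stage (iii) is pure bookkeeping: scaling the three bounds back to $K$ produces factors of $h_K^{1+s}$, $h_K^s$, and $h_K^{1/2+s}$ on $\|v-P_Kv\|_{0,K}$, $|v-P_Kv|_{1,K}$, and $\|v-P_Kv\|_{0,\partial K}$ respectively, which match the asserted inequality after multiplication by the indicated powers of $h_K$. I do not expect any real obstacle. The two points that demand a little care are the verification of the affine invariance $\widehat{P_Kv}=\hat P\hat v$, which relies on $P_k$ being preserved under affine pullback, and the tracking of powers of $h_K$ in the boundary term, where the surface Jacobian scales like $h_K^{d-1}$ rather than $h_K^d$.
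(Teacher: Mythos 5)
Your proposal is correct and takes essentially the same approach as the paper: the paper gives no details at all, merely asserting that the lemma follows ``by the scaling argument and the trace theorem'' with a citation to Brenner--Scott, and your three-stage argument (affine pullback with the affine invariance of the $L^2$ projection, Bramble--Hilbert plus the trace theorem on the reference element $\hat K$, then rescaling with the correct powers of $h_K$, including the $h_K^{d-1}$ surface Jacobian) is precisely the standard proof being invoked. The only point worth flagging is that when $s=\min\{r,k\}$ is not an integer you need the fractional-order (Dupont--Scott) version of the Bramble--Hilbert lemma, which your setup accommodates since $\hat P$ reproduces all of $P_k(\hat K)\supseteq P_{\lfloor 1+s\rfloor -1}(\hat K)$.
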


For later analysis, we make a regularity assumption:
\begin{eqnarray} \label{as:2}
  \text{ for some } r>0, \ u^l\in H^{1+r}(X)\cap C(\overline{X}),\ 0\le l \le L.
\end{eqnarray}

\begin{thm}
Let $\{u^l\}$ and $\bm{u}_h$ be the solutions of \eqref{eq:DO} and
\eqref{DODSD:global2b}--\eqref{DODSD:global2e}, respectively. Under assumptions
\eqref{as:1} and \eqref{as:2}, we have, for all sufficiently small $h$,
\begin{equation}\label{eqn:es}
|||\{u^l\}-\bm{u}_h|||\le C_1 h^{\min\{r,k\}+\frac12}\left(\sum_{l=0}^L\|u^l\|_{r+1,X}\right)^\frac12.
\end{equation}
\end{thm}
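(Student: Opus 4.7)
The plan is to follow the standard streamline-diffusion DG convergence template. First, decompose the error via the local $L^2$-projection: for each direction write $u^l - u_h^l = \eta^l + \xi_h^l$ with $\eta^l = u^l - P_K u^l$ and $\xi_h^l = P_K u^l - u_h^l \in V_h^l$, and set $\bm{\eta} = \{\eta^l\}$, $\bm{\xi}_h = \{\xi_h^l\}$. Since $u^l \in H^{1+r}(X) \cap C(\overline{X})$ has no interior jumps, vanishes on $\partial^l_- X$, and satisfies \eqref{eq:DO} pointwise, testing \eqref{eq:DO} against $w_l(v_h^l + \delta\,\bm{\omega}_l\cdot\nabla v_h^l)$ and summing over $l$ and $K$ shows that $\{u^l\}$ satisfies the same variational identity as in \eqref{DODSD:global2b}. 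Subtracting yields Galerkin orthogonality
\[ a_h(\bm{\xi}_h,\bm{v}_h) = -a_h(\bm{\eta},\bm{v}_h) \quad \forall\, \bm{v}_h \in \bm{V}_h. \]

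Next, invoke Lemma~\ref{lem:32} with $\bm{v}_h = \bm{\xi}_h$ to obtain $|||\bm{\xi}_h|||^2 \le 3\,a_h(\bm{\xi}_h,\bm{\xi}_h) = -3\,a_h(\bm{\eta},\bm{\xi}_h)$, reducing the task to bounding the right-hand side by $C h^{\min\{r,k\}+1/2}\,\bigl(\sum_l\|u^l\|^2_{r+1,X}\bigr)^{1/2}\,|||\bm{\xi}_h|||$. The estimate of $a_h(\bm{\eta},\bm{\xi}_h)$ proceeds term by term. For the transport piece $(\bm{\omega}_l\cdot\nabla\eta^l,\xi_h^l)_K$, integrate by parts; the $L^2$-orthogonality $(\eta^l,\bm{\omega}_l\cdot\nabla\xi_h^l)_K = 0$, valid because $\bm{\omega}_l\cdot\nabla\xi_h^l\in P_{k-1}(K)\subset P_k(K)$, eliminates the interior residual, leaving a boundary contribution controlled via Lemma~\ref{lem:34} and matched to the jump/boundary pieces of $|||\bm{\xi}_h|||$. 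The streamline-diffusion piece $\delta(\bm{\omega}_l\cdot\nabla\eta^l,\bm{\omega}_l\cdot\nabla\xi_h^l)_K$ is handled by Cauchy--Schwarz together with $\delta=\bar{c}h$ and Lemma~\ref{lem:34}, which extract a factor $\sqrt{\delta}\,h^{\min\{r,k\}}=O(h^{\min\{r,k\}+1/2})$, with $\sqrt{\delta}\|\bm{\omega}_l\cdot\nabla\xi_h^l\|_{0,K}$ absorbed into $|||\bm{\xi}_h|||$. The absorption terms $(\sigma_t\eta^l,\xi_h^l)_K$ and $\delta(\sigma_t\eta^l,\bm{\omega}_l\cdot\nabla\xi_h^l)_K$ are routine, the scattering contributions are bounded by Lemma~\ref{lem:31} direction-by-direction combined with Lemma~\ref{lem:34}, and the inflow-jump term $\langle[\eta^l],\xi_+^l|\bm{\omega}_l\cdot\bm{n}|\rangle_{\partial^l_- K}$ is estimated by the $L^2$-trace bound in Lemma~\ref{lem:34} paired with the jump component of $|||\bm{\xi}_h|||$. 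Young's inequality absorbs the $|||\bm{\xi}_h|||$ factor and yields $|||\bm{\xi}_h||| \le C h^{\min\{r,k\}+1/2}\bigl(\sum_l\|u^l\|^2_{r+1,X}\bigr)^{1/2}$.

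Finally, the triangle inequality $|||\{u^l\}-\bm{u}_h|||\le|||\bm{\eta}|||+|||\bm{\xi}_h|||$ combined with a direct estimate of $|||\bm{\eta}|||$ from Lemma~\ref{lem:34} (using continuity of $u^l$ so that $[\eta^l]=-[P_K u^l]$ is controlled by trace bounds on both sides of each edge, and noting that the $\sqrt{\delta}$ weight in the SD part of $|||\cdot|||$ supplies the extra $h^{1/2}$) completes the proof. The main obstacle is the transport term: a naive Cauchy--Schwarz on $(\bm{\omega}_l\cdot\nabla\eta^l,\xi_h^l)_K$ only produces $O(h^{\min\{r,k\}})$, losing the advertised half-order. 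The extra $h^{1/2}$ is recovered by exploiting the $L^2$-orthogonality of $P_K$ against $P_{k-1}$ after integration by parts and then pairing the resulting edge residuals carefully against the SD-norm contribution $\delta\|\bm{\omega}_l\cdot\nabla\xi_h^l\|_{0,K}^2$ and the jump penalty in $|||\cdot|||$; keeping this bookkeeping consistent across all cross-terms, and ensuring that the constants hidden in ``$h$ sufficiently small'' from Lemma~\ref{lem:32} are not disturbed when Young's inequality is applied, is the most delicate part of the analysis.
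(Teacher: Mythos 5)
Your proposal is correct, but it takes a genuinely different route from the paper's. The paper never splits the error before applying coercivity: since Lemma \ref{lem:32} is stated on all of $\bm{W}_h$ (piecewise $H^1$ functions, not just polynomials), it applies the stability bound directly to the full error $e^l=u^l-u^l_h$ and then uses Galerkin orthogonality in the \emph{second} argument, giving $|||\{e^l\}|||^2\le 3\,a_h(\{e^l\},\{e^l\})=3\,a_h(\{e^l\},\{\eta^l\})$. Consequently the transport term appears with the derivative on the error, $(\bm{\omega}_l\cdot\nabla e^l,\eta^l)_K$, and the half order is harvested purely from a $\delta$-weighted Young inequality, $\tfrac16\delta\|\bm{\omega}_l\cdot\nabla e^l\|_{0,K}^2+C\delta^{-1}h_K^{2+2\min\{r,k\}}\|u^l\|_{r+1,K}^2$, where $\delta=\bar c\,h$ makes the second term $O(h^{1+2\min\{r,k\}})$ and the first is absorbed into the streamline-diffusion component of the triple norm; no integration by parts, no projection orthogonality, and no separate estimate of $|||\bm{\eta}|||$ via the triangle inequality are needed. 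Your route --- coercivity on $\bm{\xi}_h$ only, then integration by parts plus the superapproximation identity $(\eta^l,\bm{\omega}_l\cdot\nabla\xi^l_h)_K=0$ and edge bookkeeping --- is the classical Johnson--Pitk\"aranta DG argument; it is more work (your ``most delicate part'' is exactly what the paper's choice of $e$ as test function sidesteps), but it is also more robust: it yields $h^{\min\{r,k\}+\frac12}$ uniformly down to $\delta=0$, so it contains the DODG analysis of \cite{HHE2010} as a special case, whereas the paper's argument degenerates as $\delta\to0$ because of the $\delta^{-1}$ weight. One point to tighten in your sketch: the inflow-jump term $\left<[\eta^l],\xi^l_+|\bm{\omega}_l\cdot\bm{n}|\right>_{\partial^l_-K}$ cannot be estimated in isolation against the jump seminorm, since the one-sided trace $\xi^l_+$ is not controlled by $|||\bm{\xi}_h|||$; it must be merged with the element-boundary terms produced by your integration by parts, after which the interior-edge contribution collapses to $-\left<\eta^l_-,[\xi^l_h]\,|\bm{\omega}_l\cdot\bm{n}|\right>_e$ and only jumps and outflow traces of $\xi^l_h$ survive --- your ``careful bookkeeping'' remark indicates you know this, but the term-by-term list as written is slightly misleading on that point.
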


\begin{proof}
By the regularity assumption  \eqref{as:2}, we have
  \begin{equation}
    a_h\left(\{u^l\},\{v^l_h\}\right)=0\quad \forall\, \{v^l_h\}\in \bm{V}_h.
  \end{equation}
Subtracting the above equality from   \eqref{DODSD:global2b}, we obtain the Galerkin orthogonality
  \begin{equation}\label{Gorth}
    a_h\left(\{u^l\}-\bm{u}_h,\{v^l_h\}\right)=0\quad \forall\, \{v^l_h\}\in \bm{V}_h.
  \end{equation}

Let $P_h$ denote the $L^2$-orthogonal operator onto $V^l_h$, $0\le l\le L$, in an elementwise way,
i.e., for $v\in L^2(X)$, let
\[ P_hv|_K:=P_Kv\quad \forall\, K\in T_h. \]
Set
\[   \eta^l=u^l-P_h u^l,\quad \xi^l=P_hu^l-u^l_h, \text{ and } e^l=u^l-u^l_h. \]
Note that $e^l_-|_{\partial^l_- K}=0$ for each $\partial^l_- K\subset \partial^l_- X$.

From Lemma \ref{lem:32} and the Galerkin orthogonality \eqref{Gorth}, we have
\begin{eqnarray}\label{eq:3b}
  |||\{e^l\}|||^2
  \le  3 a_h\left(\{e^l\},\{e^l\}\right)
  =  3 a_h\left(\{e^l\},\{\eta^l\}\right).
\end{eqnarray}

On the other hand, by the definition of the bilinear form $a_h(\cdot,\cdot)$, we have
\begin{equation}
  a_h\left(\{e^l\},\{\eta^l\}\right)=I_1+I_2+I_3+I_4+I_5,
\label{eq:4.29}
\end{equation}
where
\begin{align*}
I_1&=\sum_{l=0}^Lw_l\sum_{K\in T_h}\left({\bm{\omega}}_l\cdot \nabla e^l,\eta^l\right)_K,\\
I_2&=\sum_{l=0}^Lw_l\sum_{K\in T_h}\delta\left({\bm{\omega}}_l\cdot \nabla e^l,{\bm{\omega}}_l
  \cdot \nabla \eta^l\right)_K,\\
I_3&=\sum_{l=0}^Lw_l\sum_{K\in T_h}\left(\sigma_t e^l,\eta^l
  +\delta\, {\bm{\omega}_l}\cdot \nabla \eta^l\right)_K\\
I_4&= \sum_{l=0}^Lw_l\sum_{K\in T_h}\left(\sigma_s\sum_{i=0}^L w_i g(\bm{x},{\bm{\omega}}_l
 \cdot{\bm{\omega}}_i)e^i,\eta^l+\delta \,{\bm{\omega}_l}\cdot \nabla \eta^l\right)_K,\\
I_5&=\sum_{l=0}^Lw_l\sum_{K\in T_h}\left<{ [e^l]},
 \eta^l_+|{\bm{\omega}_l}\cdot \bm{n}|\right>_{\partial^l_- K}.
\end{align*}

By using the Cauchy-Schwarz inequality, Young's inequality, and Lemma \ref{lem:34}, we get
\begin{align} \label{eq:4.30}
|I_1|&\le \sum_{l=0}^Lw_l\sum_{K\in T_h}\|{\bm{\omega}}_l\cdot \nabla e^l\|_{0,K}\|\eta^l\|_{0,K}\\
 &\le \sum_{l=0}^Lw_l\sum_{K\in T_h}Ch_K^{1+\min\{r,k\}}\|{\bm{\omega}}_l
  \cdot \nabla e^l\|_{0,K}\|u^l\|_{r+1,K}\nonumber\\
&\le\sum_{l=0}^Lw_l\sum_{K\in T_h}\left(\frac16\delta\,\|{\bm{\omega}}_l
\cdot \nabla e^l\|^2_{0,K}+Ch_K^{2+2\min\{r,k\}}\delta^{-1}\|u^l\|_{r+1,K}^2\right),\nonumber
\end{align}
\begin{align}\label{eq:4.31}
  |I_2|&\le \sum_{l=0}^Lw_l\sum_{K\in T_h}\left(\delta\,\|{\bm{\omega}}_l
\cdot \nabla e^l\|_{0,K}\|{\bm{\omega}}_l\cdot \nabla \eta^l\|_{0,K}\right)\\
&\le \sum_{l=0}^Lw_l\sum_{K\in T_h}\delta\, h^{\min\{r,k\}}_K\|{\bm{\omega}}_l
\cdot \nabla e^l\|_{0,K}\|u^l\|_{r+1,K}\nonumber\\
&\le \sum_{l=0}^Lw_l\sum_{K\in T_h}\left(\frac16\delta\,\|{\bm{\omega}}_l
\cdot \nabla e^l\|_{0,K}^2+ C h^{2\min\{r,k\}}_K\delta\,\|u^l\|_{r+1,K}^2\right),\nonumber
\end{align}
\begin{align}\label{eq:4.32}
 |I_3|&\le \sum_{l=0}^Lw_l\sum_{K\in T_h}\left(\|\sigma_t e^l\|_{0,K}
  \|\eta^l+\delta\,{\bm{\omega}}_l\cdot \nabla \eta^l\|_{0,K}\right)\\
&\le \sum_{l=0}^Lw_l\sum_{K\in T_h}\left(\frac16c_0'\|e^l\|_{0,K}^2+C
 \left(\|\eta^l\|_{0,K}^2+\delta\,\|{\bm{\omega}}_l\cdot \nabla \eta^l\|_{0,K}^2\right)\right)\nonumber\\
&\le\sum_{l=0}^Lw_l\sum_{K\in T_h}\left(\frac16c_0'\|e^l\|_{0,K}^2
  +C\left(h_K^{2+2\min\{r,k\}}\|u^l\|_{r+1,K}^2+\delta\, h_K^{2\min\{r,k\}}\|u^l\|^2_{r+1,K}\right)\right),
\nonumber
\end{align}
\begin{align}\label{eq:4.33}
 |I_4|&\le C\left[\sum_{l=0}^Lw_l\sum_{K\in T_h}\|e^l\|^2_{0,K}\right]^\frac12
 \left[\sum_{l=0}^Lw_l\sum_{K\in T_h}\|\eta^l+\delta\,{\bm{\omega}}_l\cdot \nabla \eta^l\|^2_{0,K}\right]^\frac12\\
 &\le \sum_{l=0}^Lw_l\sum_{K\in T_h}\frac16 c_0'\|e^l\|^2_{0,K}+ C\sum_{l=0}^Lw_l
 \sum_{K\in T_h}\left(\|\eta^l\|_{0,K}^2+\delta\,\|{\bm{\omega}}_l\cdot \nabla \eta^l\|^2_{0,K}\right)\nonumber\\
 &\le \sum_{l=0}^Lw_l\sum_{K\in T_h}\frac16 c_0'\|e^l\|^2_{0,K}+
 C\sum_{l=0}^Lw_l\sum_{K\in T_h}\left(h_K^{2+2\min\{r,k\}}\|u^l\|_{r+1,K}^2
  +\delta\, h_K^{2\min\{r,k\}}\|u^l\|_{r+1,K}^2\right)\nonumber
\end{align}
and
\begin{align}\label{eq:3e}
|I_5|&\le\sum_{l=0}^Lw_l\sum_{K\in T_h}\left(\frac16\left<[e^l],[e^l]|{\bm{\omega}}_l
\cdot \bm{n}|\right>_{\partial ^l_- K}+C\left<\eta^l_+,\eta^l_+|{\bm{\omega}}_l
\cdot \bm{n}|\right>_{\partial^l_- K}\right)\\
&\le \sum_{l=0}^Lw_l\sum_{K\in T_h}\left(\frac16\left<[e^l],[e^l]|{\bm{\omega}}_l
\cdot \bm{n}|\right>_{\partial^l_- K}+Ch_K^{1+2\min\{r,k\}}\|u^l\|_{r+1,K}^2\right). \nonumber
\end{align}

Combining (\ref{eq:3b})--(\ref{eq:3e}), we obtain
\begin{equation}
  |||\{e^l\}|||^2 \le \frac12|||\{e^l\}|||^2 +C h_K^{1+2\min\{r,k\}}
\sum_{l=0}^Lw_l\sum_{K\in T_h}\|u^l\|_{r+1,K}^2,
\end{equation}
where $\delta=\bar{c}\,h$ is used. Thus we complete the proof of this theorem.
\end{proof}

\begin{remark}
Note that $\|\bm{\omega}_l\cdot\nabla (u^l-u_h^l)\|_{0,K}$ is included in the norm
$|||\{u^l\}-\bm{u}_h|||$, therefore \eqref{eqn:es} also gives a stability estimate for
$\|\bm{\omega}_l\cdot\nabla (u^l-u_h^l)\|_{0,K}$ in terms of $\|u^l\|_{r+1,X}$.
We remark that this estimate was not established for the DODG approximation solution of the RTE,
cf.\ Theorem 4.6 in  \cite{HHE2010}.
\end{remark}

Error estimates between the solution $u$ to the RTE and the solution $\{u^l\}$ to the
semi-discretized equation (\ref{eq:DO}) have been proved in \cite{HHE2010}.

\begin{thm}\label{thm:4.7}
Let  $\{u^l\}$ and $u$ be the solutions of \eqref{eq:DO}
and \eqref{eq:rte}--\eqref{eq:rtee}, respectively.
In 3D, if the regularity assumption \eqref{as:2} holds, then we have
\begin{equation}
\left(\sum_{l=0}^lw_l\sum_{K\in T_h}\left\|u^l(\cdot)-u(\cdot,\bm{\omega}_l)
\right\|_{0,K}^2\right)^\frac12\le C_2n^{-r-1}\left(\int_X\|u(\cdot,\cdot)\|_{r+1,\Omega}^2
d\bm{x}\right)^\frac12,
\end{equation}
where $C_1$ is positive constant depending on $r$ and the phase function $g$.
\end{thm}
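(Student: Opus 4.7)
The plan is to derive an equation for the angular-quadrature error $E^l(\bm{x}) := u^l(\bm{x}) - u(\bm{x},\bm{\omega}_l)$, then apply a direction-wise energy identity plus an application of Lemma \ref{lem:31} to close an $L^2$ estimate, and finally insert the quadrature accuracy \eqref{neq:int1} with $s=r+1$.

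First, I would subtract \eqref{eq:rte} evaluated at $\bm{\omega}=\bm{\omega}_l$ from \eqref{eq:DO}. Since the source terms and the transport/absorption coefficients match exactly, the residual is purely the scattering mismatch, and a brief rearrangement yields
\begin{equation*}
\bm{\omega}_l\cdot\nabla E^l + \sigma_t E^l
= \sigma_s\sum_{i=0}^L w_i\, g(\cdot,\bm{\omega}_l\cdot\bm{\omega}_i)\, E^i + R^l(\bm{x}),
\end{equation*}
where the residual
\begin{equation*}
R^l(\bm{x}) = \sigma_s\!\left[\sum_{i=0}^L w_i\, g(\bm{x},\bm{\omega}_l\cdot\bm{\omega}_i) u(\bm{x},\bm{\omega}_i) - \int_\Omega g(\bm{x},\bm{\omega}_l\cdot\hat{\bm{\omega}})u(\bm{x},\hat{\bm{\omega}})\,d\sigma(\hat{\bm{\omega}})\right]
\end{equation*}
is exactly the quadrature error of \eqref{quad0} applied to the integrand $g(\bm{x},\bm{\omega}_l\cdot\,\cdot)\,u(\bm{x},\cdot)$. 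Note also that $E^l=0$ on $\partial_-^l X$, since both $u^l$ and $u(\cdot,\bm{\omega}_l)$ vanish there.

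Next, I would execute the standard energy identity: multiply the error equation by $E^l$, integrate over $X$, use Green's identity on the transport term exactly as in \eqref{eq:3.25}, and drop the nonnegative outflow boundary contribution on $\partial_+^l X$. This gives
\begin{equation*}
\int_X \sigma_t (E^l)^2\,d\bm{x} \le \int_X \sigma_s\sum_{i=0}^L w_i g(\cdot,\bm{\omega}_l\cdot\bm{\omega}_i) E^i E^l\,d\bm{x} + \int_X R^l E^l\,d\bm{x}.
\end{equation*}
I would then multiply by $w_l$ and sum over $l=0,\dots,L$. Lemma \ref{lem:31} with $v^l_h=w^l_h=E^l$ bounds the double scattering sum by $\sum_l w_l\int_X m\sigma_s (E^l)^2\,d\bm{x}$, and the coercivity bound \eqref{eq:312} collapses the remaining left-hand side to
\begin{equation*}
c_0'\sum_{l=0}^L w_l\|E^l\|_{0,X}^2 \le \sum_{l=0}^L w_l\int_X R^l E^l\,d\bm{x}.
\end{equation*}
A Cauchy--Schwarz in the weighted $l$-sum then gives
\begin{equation*}
\left(\sum_{l=0}^L w_l\|E^l\|_{0,X}^2\right)^{1/2} \le \frac{1}{c_0'}\left(\sum_{l=0}^L w_l\|R^l\|_{0,X}^2\right)^{1/2}.
\end{equation*}

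Finally, I would invoke the 3D quadrature accuracy \eqref{neq:int1} pointwise in $\bm{x}$ with $s=r+1$, obtaining $|R^l(\bm{x})|\le C\,n^{-r-1}\|\sigma_s\, g(\bm{x},\bm{\omega}_l\cdot\,\cdot)u(\bm{x},\cdot)\|_{r+1,\Omega}$. Squaring, integrating over $X$, and using the boundedness of $\sigma_s$ and the smoothness of $g$ in its second argument (so that multiplication by $g(\bm{x},\bm{\omega}_l\cdot\,\cdot)$ is bounded on $H^{r+1}(\Omega)$ uniformly in $\bm{x}$ and $\bm{\omega}_l$), yields $\|R^l\|_{0,X}^2 \le C n^{-2(r+1)}\int_X\|u(\cdot,\cdot)\|_{r+1,\Omega}^2\,d\bm{x}$. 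Since $\sum_l w_l$ is uniformly bounded (it is an approximation of $|\Omega|$), combining these estimates and rewriting $\|E^l\|_{0,X}^2=\sum_{K\in T_h}\|E^l\|_{0,K}^2$ delivers the claimed inequality. The main obstacle is a bookkeeping one: making sure that the $L^\infty_{\bm{x}}$ bound on the $H^{r+1}(\Omega)$ multiplier norm of $g(\bm{x},\bm{\omega}_l\cdot\,\cdot)$ holds uniformly in $\bm{x}$ and $\bm{\omega}_l$, so that the constant $C_2$ depends only on $r$ and $g$ as stated; this is where smoothness hypotheses on the phase function (which the paper has already assumed for \eqref{neq:int1} to hold, per the remark after \eqref{eq:36}) are essential.
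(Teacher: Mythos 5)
Your proof is correct, and it is essentially the argument behind this theorem: the paper itself offers no proof of Theorem \ref{thm:4.7}, deferring to \cite{HHE2010}, and the proof there proceeds exactly along your lines --- subtract \eqref{eq:rte} evaluated at $\bm{\omega}_l$ from \eqref{eq:DO} to get the error equation with the quadrature residual, run the weighted energy estimate (Green's identity as in \eqref{eq:3.25}, drop the outflow term, sum with weights $w_l$, absorb the scattering sum via the analogue of Lemma \ref{lem:31} and the coercivity \eqref{eq:312}), and bound the residual pointwise in $\bm{x}$ by \eqref{neq:int1} with $s=r+1$. Your closing caveat is also well placed: the stated hypothesis \eqref{as:2} supplies only spatial regularity, so the angular regularity $u\in L^2(X;H^{r+1}(\Omega))$ implicit in the right-hand side, together with enough smoothness of $g(\bm{x},\cdot)$ (uniformly in $\bm{x}$ and $\bm{\omega}_l$) for the $H^{r+1}(\Omega)$ multiplier bound, must be tacitly assumed --- which is precisely how the constant comes to depend on $r$ and the phase function $g$, as in the cited result.
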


Similarly, we can obtain the following theorem.

\begin{thm}\label{thm:4.8}
Let  $\{u^l\}$ and $u$ be the solutions of \eqref{eq:DO} and \eqref{eq:rte}--\eqref{eq:rtee},
respectively. In 2D, if the solution $u$
to RTE  \eqref{eq:rte}--\eqref{eq:rtee} is in $L^2(X, C^2(\Omega))$
and there exists a positive constant $C$ such that
\begin{equation}
  \sup_{\bm{x}\in X,\ \bm{\omega}\in \Omega}\|g''(\bm{x},\bm{\omega}\cdot )\|_{0,\infty,\Omega} \le C,
\end{equation}
where $g''(\bm{x},t)=\frac{\partial^2 g(\bm{x},t)}{\partial t^2}$, then we have
\begin{equation}
    \left(\sum_{l=0}^lw_l\sum_{K\in T_h}\left\|u^l(\cdot)-u(\cdot,\bm{\omega}_l)
\right\|_{0,K}^2\right)^\frac12= O(h_\theta^{2}),
\end{equation}
when $h_\theta$ is sufficiently small.
\end{thm}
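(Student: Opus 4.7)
The plan is to follow the same strategy as in Theorem \ref{thm:4.7}, replacing the 3D quadrature error bound \eqref{neq:int1} with the 2D trapezoidal estimate \eqref{neq:int}. Let $\tilde u^l(\bm{x}):=u(\bm{x},\bm{\omega}_l)$ be the exact RTE solution evaluated at the $l$-th discrete ordinate and set $\epsilon^l:=\tilde u^l-u^l$. Evaluating \eqref{eq:rte} at $\bm{\omega}=\bm{\omega}_l$ and subtracting \eqref{eq:DO}, I obtain the error system
\[
\bm{\omega}_l\cdot\nabla\epsilon^l+\sigma_t\epsilon^l-\sigma_s\sum_{i=0}^L w_i g(\cdot,\bm{\omega}_l\cdot\bm{\omega}_i)\epsilon^i=Q_l\quad\text{in }X,\qquad \epsilon^l=0\ \text{on }\partial_-^l X,
\]
with the quadrature defect $Q_l(\bm{x})=\sigma_s(\bm{x})\bigl[(Su)(\bm{x},\bm{\omega}_l)-(S_d\tilde u)(\bm{x},\bm{\omega}_l)\bigr]$.

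First I would control $Q_l$ pointwise in $\bm{x}$. Setting $\bar F_{\bm{x},l}(\theta):=g(\bm{x},\bm{\omega}_l\cdot\bm{\omega}(\theta))\,u(\bm{x},\bm{\omega}(\theta))$ and applying the trapezoidal rule error identity \eqref{neq:int} to $\bar F_{\bm{x},l}$ on $[0,2\pi]$, one gets
\[
|Q_l(\bm{x})|\le \frac{\pi h_\theta^{2}}{6}\,\|\sigma_s\|_{0,\infty,X}\,\bigl\|\partial_\theta^{2}\bar F_{\bm{x},l}\bigr\|_{0,\infty,[0,2\pi]}.
\]
The hypothesis $\sup_{\bm{x},\bm{\omega}}\|g''(\bm{x},\bm{\omega}\cdot)\|_{0,\infty,\Omega}\le C$, combined with the chain rule and the uniform bound $|\bm{\omega}_l\cdot\bm{\omega}(\theta)|\le 1$, permits me to estimate $\|\partial_\theta^{2}\bar F_{\bm{x},l}\|_{0,\infty}$ by $C\,\|u(\bm{x},\cdot)\|_{C^2(\Omega)}$. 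Squaring and integrating over $X$ and using the regularity $u\in L^2(X,C^2(\Omega))$ yields
\[
\|Q_l\|_{0,X}^{2}\le C\,h_\theta^{4}\int_X\|u(\bm{x},\cdot)\|_{C^2(\Omega)}^{2}\,d\bm{x}.
\]

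Next I would derive the stability estimate. Multiplying the $l$-th error equation by $\epsilon^l$, integrating on $X$, applying the divergence identity \eqref{eq:3.25} (without interior jumps, since $\epsilon^l$ is globally smooth enough and $\epsilon^l=0$ on $\partial_-^l X$) and summing in $l$ with weights $w_l$, the boundary contribution becomes $\tfrac12\sum_l w_l\langle\epsilon^l,\epsilon^l\bm{\omega}_l\cdot\bm{n}\rangle_{\partial_+^l X}\ge 0$. For the scattering term I invoke Lemma \ref{lem:31} in the form
\[
\sum_{l=0}^L w_l\Bigl(\sigma_s\sum_i w_i g(\cdot,\bm{\omega}_l\cdot\bm{\omega}_i)\epsilon^i,\epsilon^l\Bigr)_X\le\sum_l w_l(m\sigma_s\epsilon^l,\epsilon^l)_X,
\]
and then apply the coercivity bound $\sigma_t-m\sigma_s\ge c_0'$ from \eqref{eq:312}. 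Together with a Cauchy--Schwarz/Young bound on the right-hand side $(Q_l,\epsilon^l)_X$, this gives
\[
\sum_{l=0}^L w_l\,\|\epsilon^l\|_{0,X}^{2}\le C\sum_{l=0}^L w_l\,\|Q_l\|_{0,X}^{2}.
\]
Combining this with the pointwise quadrature estimate and $\sum_l w_l=2\pi$ yields the claimed $O(h_\theta^{2})$ bound after noting $\sum_{K\in T_h}\|\epsilon^l\|_{0,K}^2=\|\epsilon^l\|_{0,X}^2$.

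The main obstacle will be the careful bookkeeping in the pointwise quadrature bound: one must estimate $\partial_\theta^2\bigl[g(\bm{x},\bm{\omega}_l\cdot\bm{\omega}(\theta))\,u(\bm{x},\bm{\omega}(\theta))\bigr]$ uniformly in $\bm{x}$ and $l$, which requires applying the chain rule twice and using both the hypothesis on $g''$ and the $C^2$-regularity of $u$ in $\bm{\omega}$. The stability part is routine once Lemma \ref{lem:31} and \eqref{eq:312} are invoked, since the argument parallels the continuous energy estimate that underpins Theorem \ref{thm:4.7}.
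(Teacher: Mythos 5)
Your proposal is correct and is essentially the argument the paper intends: the paper prints no proof of this theorem, deferring with ``Similarly'' to the proof of Theorem \ref{thm:4.7} from \cite{HHE2010}, which proceeds exactly as you do --- subtract the semi-discrete system \eqref{eq:DO} from the RTE evaluated at the ordinates, bound the resulting quadrature defect by $O(h_\theta^2)$ via the trapezoidal error identity \eqref{neq:int} together with the hypotheses on $g''$ and $u\in L^2(X,C^2(\Omega))$, and close with the weighted energy estimate using the Green identity \eqref{eq:3.25}, Lemma \ref{lem:31}, and the coercivity bound \eqref{eq:312} for $h_\theta$ small. The only detail worth adding is that your chain-rule bound on $\partial_\theta^2\bigl[g(\bm{x},\bm{\omega}_l\cdot\bm{\omega}(\theta))u(\bm{x},\bm{\omega}(\theta))\bigr]$ also needs uniform bounds on $g$ and $g'$, which follow from the stated $g''$ bound combined with the normalization \eqref{normilized} on the compact interval $[-1,1]$.
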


Combining the above three theorems, we obtain the following results.

\begin{thm}
Let $\bm{u}_h$ and $u$ be the solutions of
\eqref{DODSD:global2b}--\eqref{DODSD:global2e} and \eqref{eq:rte}--\eqref{eq:rtee}, respectively.
Under the assumption of Theorem \ref{thm:4.7}, we have
\begin{align}\label{eq:4.39}
\left(\sum_{l=0}^lw_l\sum_{K\in T_h}\left\|u^l_h(\cdot)-u(\cdot,\bm{\omega}_l)
\right\|_{0,K}^2\right)^\frac12
&\le C_1 h^{\min\{r,k\}+\frac12}\left(\sum_{l=0}^Lw_l\sum_{K\in T_h}\|u^l\|_{r+1,K}^2\right)^\frac12\\
&{}\quad + C_2n^{-r-1}\left(\int_X\|u(\cdot,\cdot)\|_{r+1,\Omega}^2d\bm{x}\right)^\frac12,\nonumber
  \end{align}
  when  $h$ is  sufficiently small.
\end{thm}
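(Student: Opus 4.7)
The proof is essentially a triangle-inequality assembly that combines the two error estimates already proved in the paper, namely the error bound in the $|||\cdot|||$-norm between the fully discrete solution $\bm{u}_h$ and the semi-discrete solution $\{u^l\}$, and Theorem \ref{thm:4.7}, which bounds the discrete-ordinate error between $\{u^l\}$ and the exact RTE solution $u$. So the plan is to insert $u^l$ as an intermediate quantity and control each piece separately.

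First, I would apply the triangle inequality pointwise on each element $K$ and each ordinate $\bm{\omega}_l$,
\[
\|u_h^l(\cdot)-u(\cdot,\bm{\omega}_l)\|_{0,K}\le \|u_h^l-u^l\|_{0,K}+\|u^l(\cdot)-u(\cdot,\bm{\omega}_l)\|_{0,K},
\]
and then square, multiply by $w_l$, sum over $l$ and $K$, and take a square root. Using the elementary inequality $(a+b)^{1/2}\le a^{1/2}+b^{1/2}$ after expanding the squares (or simply using $\ell^2$ Minkowski), the left-hand side of \eqref{eq:4.39} is bounded above by
\[
\Bigl(\sum_{l}w_l\sum_{K\in T_h}\|u_h^l-u^l\|_{0,K}^2\Bigr)^{1/2}
+\Bigl(\sum_{l}w_l\sum_{K\in T_h}\|u^l(\cdot)-u(\cdot,\bm{\omega}_l)\|_{0,K}^2\Bigr)^{1/2}.
\]

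For the second term I would invoke Theorem \ref{thm:4.7} directly, which delivers the factor $C_2 n^{-r-1}\bigl(\int_X\|u(\cdot,\cdot)\|_{r+1,\Omega}^2 d\bm{x}\bigr)^{1/2}$. For the first term I would use the inequality
\[
c_0'\sum_{l}w_l\sum_{K\in T_h}\|u_h^l-u^l\|_{0,K}^2\le |||\{u^l\}-\bm{u}_h|||^2,
\]
which follows immediately from the first term in the definition of the triple-norm, and then apply the error estimate \eqref{eqn:es} from the preceding theorem to obtain $C_1 h^{\min\{r,k\}+1/2}\bigl(\sum_l\|u^l\|_{r+1,X}\bigr)^{1/2}$ (absorbing the $1/\sqrt{c_0'}$ factor into $C_1$, and recasting the sum in terms of the elementwise norms used in \eqref{eq:4.39}).

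Combining the two bounds yields \eqref{eq:4.39}. There is no genuine obstacle here — the two earlier estimates are already in exactly the form needed, and all that is required is a careful bookkeeping of the weighted $\ell^2$ sums and verifying that the triple-norm dominates the weighted $L^2$-norm of the fully discrete error. The only mild care is to check that the smallness condition on $h$ used in the preceding theorem (through Lemma \ref{lem:32}) is the hypothesis being invoked at the end.
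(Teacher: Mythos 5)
Your proposal is correct and is essentially the paper's own argument: the paper proves this theorem simply by ``combining'' the triple-norm estimate \eqref{eqn:es} with Theorem \ref{thm:4.7} via the triangle inequality, exactly as you do, with the observation that the $c_0'$-weighted $L^2$ term in the definition of $|||\cdot|||$ dominates the weighted $L^2$ error. Your added bookkeeping (absorbing $1/\sqrt{c_0'}$ into $C_1$, reconciling the form of the $\|u^l\|_{r+1}$ sum, and tracking the smallness condition on $h$) only makes explicit what the paper leaves implicit.
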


\begin{thm}
Let $\bm{u}_h$ and $u$ be the solutions of
\eqref{DODSD:global2b}--\eqref{DODSD:global2e} and \eqref{eq:rte}--\eqref{eq:rtee}, respectively.
Under the assumption of Theorem \ref{thm:4.8}, we have
\begin{equation}
\left(\sum_{l=0}^lw_l\sum_{K\in T_h}\left\|u^l_h(\cdot)-u(\cdot,\bm{\omega}_l)
\right\|_{0,K}^2\right)^\frac12    \le  C_1 h^{\min\{r,k\}+\frac12}\left(\sum_{l=0}^Lw_l
\sum_{K\in T_h}\|u^l\|_{r+1,K}^2\right)^\frac12  +O(h_\theta^2),
  \end{equation}
  when  $h$ is  sufficiently small.
\end{thm}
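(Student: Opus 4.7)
The plan is to reduce the theorem to a routine triangle-inequality combination of two estimates already in hand: the main DODSD error estimate (the theorem giving a bound on $|||\{u^l\}-\bm{u}_h|||$) for the spatial/discrete-ordinate-system error, and Theorem \ref{thm:4.8} for the $2$D angular quadrature error. No new ingredient should be required; the proof will simply mirror the proof of the analogous $3$D result.

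First, I would decompose for each $0 \le l \le L$,
\[
u_h^l(\cdot) - u(\cdot,\bm{\omega}_l) = \bigl(u_h^l - u^l\bigr) + \bigl(u^l - u(\cdot,\bm{\omega}_l)\bigr),
\]
where $\{u^l\}$ solves the semi-discretized system \eqref{eq:DO}. Since the map $\{v^l\}\mapsto \bigl(\sum_{l}w_l\sum_{K}\|v^l\|_{0,K}^2\bigr)^{1/2}$ is a seminorm on tuples of $L^2(X)$ functions (the weights $w_l>0$ make it a weighted $L^2$-type norm), Minkowski's inequality lets me split the combined norm into the two contributions.

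Second, I would bound the first (spatial DODSD) contribution by observing from the definition of $|||\cdot|||$ that
\[
\Bigl(\sum_{l=0}^L w_l \sum_{K\in T_h} c_0'\|u^l - u_h^l\|_{0,K}^2\Bigr)^{1/2} \le |||\{u^l\}-\bm{u}_h|||,
\]
so the previously established DODSD error estimate $|||\{u^l\}-\bm{u}_h||| \le C_1 h^{\min\{r,k\}+1/2}\bigl(\sum_l\|u^l\|_{r+1,X}\bigr)^{1/2}$ immediately gives the first term on the right-hand side of the claimed estimate (after renaming constants and exchanging $\sum_l\|u^l\|_{r+1,X}$ for the equivalent elementwise sum $\sum_l w_l \sum_K\|u^l\|_{r+1,K}^2$, using boundedness of the quadrature weights and the regularity assumption). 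For the second contribution, Theorem \ref{thm:4.8} delivers directly an $O(h_\theta^2)$ bound, which is exactly the additive term in the conclusion.

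Adding the two estimates yields the result. The only thing to check carefully is that the regularity assumption \eqref{as:2} needed for the DODSD error bound is consistent with the hypotheses of Theorem \ref{thm:4.8} (namely $u\in L^2(X,C^2(\Omega))$ and a uniform bound on $g''$), and that the restriction ``$h$ sufficiently small'' from the stability lemma and from \eqref{eq:312} remains in force; both are already present in the statement. There is no genuine obstacle here, since all the hard estimates (coercivity of $a_h$, Galerkin orthogonality, the interpolation Lemma \ref{lem:34}, and the quadrature error analysis for the $2$D trapezoidal rule) are packaged into the earlier theorems.
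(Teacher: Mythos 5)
Your proposal is correct and is essentially the paper's own argument: the paper gives no separate proof for this theorem, stating only that it follows by ``combining the above three theorems,'' i.e.\ exactly your triangle-inequality decomposition $u_h^l-u(\cdot,\bm{\omega}_l)=(u_h^l-u^l)+(u^l-u(\cdot,\bm{\omega}_l))$, with the first piece controlled (up to the factor $(c_0')^{-1/2}$ absorbed into $C_1$) by the $L^2$ part of the triple norm via the DODSD error estimate, and the second by Theorem \ref{thm:4.8}. Your consistency checks on the regularity hypothesis \eqref{as:2} and the smallness of $h$ match the paper's implicit requirements, so nothing is missing.
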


\section{Numerical experiments}\label{sec:NE}
In this section, we present some numerical examples of the discrete-ordinate
discontinuous-streamline diffusion method for the radiative transfer equation
\eqref{eq:rte}--\eqref{eq:rtee} in the 2D case.
The main purpose is to illustrate the convergence performance of the proposed DODSD method
and the effect of the added diffusion parameter.

\subsection{Implementation}

First, we briefly describe the implementation of the DODSD method.
For a mesh shown in Figure \ref{fig21}, the DODSD method can be carried out for one direction $\omega$
in the following order:
\begin{figure}[htb]
\begin{center}%\vskip -4cm
  \includegraphics[trim=0 318 0 260,clip,width=5in]{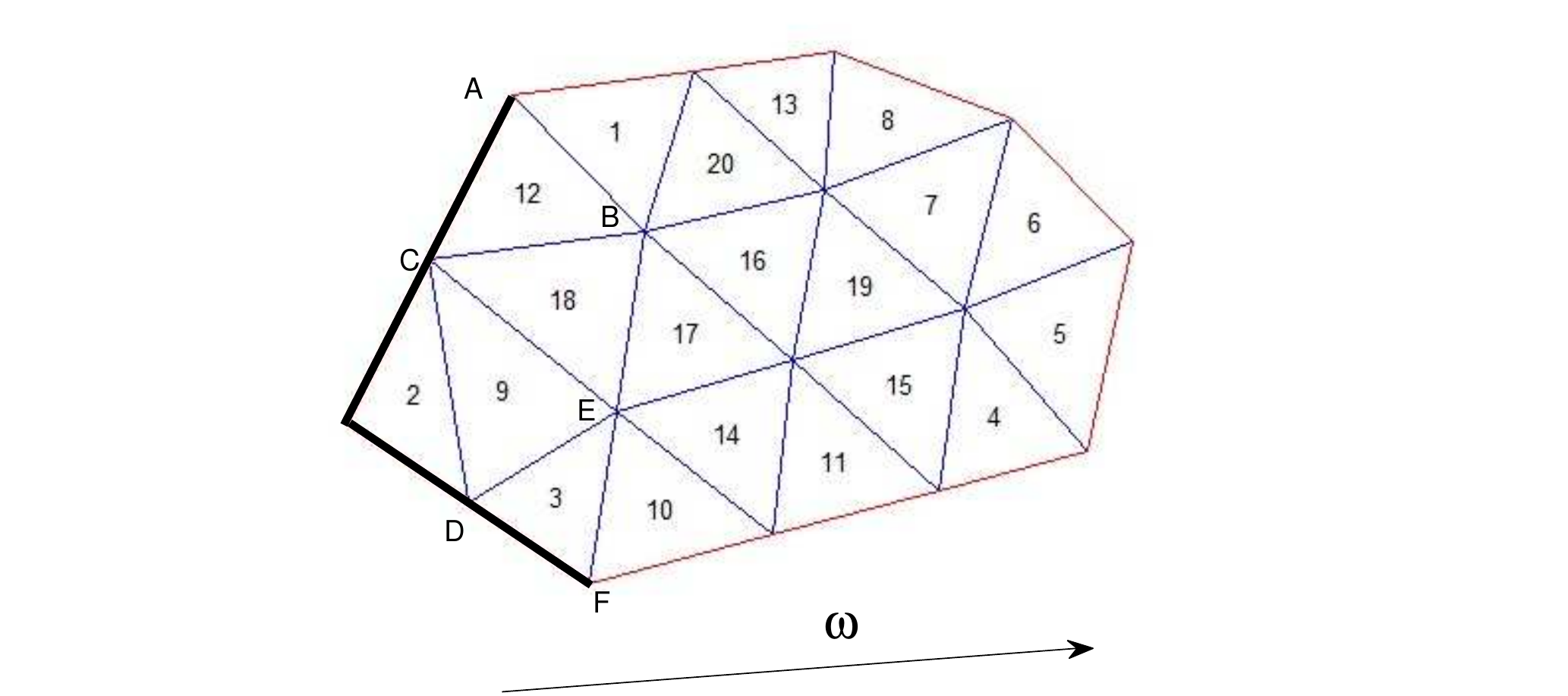}
\end{center}
%\vskip -5.8cm
\caption{A example of $T_h$ in 2D }\label{fig21}
\end{figure}

{\bf Step 1.} Denote by $T^{(1)}_h$  the elements for which all incoming boundary
$\partial^l_- K \subset \partial_-^l X$. In Figure \ref{fig21},
$T^{(1)}_h=\{K_i\colon i=2,3\}$.
We first compute $u^l_h$ for $K \in T^{(1)}_h$.

{\bf Step 2.} For $T_h\backslash T_h^{(1)}$, let $\partial_-^{l,1} X
=\{e\subset\partial^l_- K\colon K \in T_h\backslash T^{(1)}_h, \mbox{ and } u^l_+|_{e}
\mbox{ has been computed or given}\}$ denote its incoming edge.  In  Figure \ref{fig21},
$\partial_-^{l,1}$ is the broken line $\overline{ACDEF}$. Similarly, we define the set
$T^{(2)}_h$ and  compute $u^l_h$ for $K \in T^{(2)}_h$.
In Figure \ref{fig21}, $T^{(2)}_h=\{K_i\colon i=9,10\}$.

{\bf Step 3.} Repeating step 2, we obtain the non-ovrelapping decomposition
$T_h=T^{(1)}_h\cup T^{(2)}_h\cup \cdots \cup T^{(s)}_h$.
The computation should follow this sequence; that is, start the computation with the elements in
$T^{(1)}_h$ and end with the elements in $T^{(s)}_h$.

In the above procedure, the unknown function $u^l_h$ on each element $K$ is computed by following
the source iteration scheme of \eqref{DODSD}, that is, with an initial guess $u^{l,0}_h\in P_k(K)$,
$0\le l\le L$, for $j=1,2,\cdots$, we seek $u^{l,j}_h\in P_k(K)$, $0\le l\le L$, such that
\begin{align}\label{DODSD_SIb}
 & \left({\bm{\omega}}_l\cdot \nabla u^{l,j}_h +\sigma_tu^{l,j}_h,v^l_h
+\delta\, {\bm{\omega}_l}\cdot \nabla v_h^{l,j}\right)_K
 +\left<[u_h^{l,j}],v^l_+|{\bm{\omega}_l}\cdot \bm{n}|\right>_{\partial^l_- K}\\
 &\qquad =\left(\sigma_s\sum_{i=0}^L w_i g(\cdot,{\bm{\omega}}_l\cdot{\bm{\omega}}_i)
u_h^{i,j-1}+f_l,v^l_h+\delta \,{\bm{\omega}_l}\cdot \nabla v^l_h\right)_K\quad \forall\, v^l_h\in P_k(K)
\nonumber
\end{align}
with
\begin{equation}\label{DODSD_SIe}
  u_-^{l,j}|_{\partial^l_- K}=0,\quad \partial^l_- K\subset \partial_-^l X.
\end{equation}
For any $K\in T_h$ and $v_h\in P_k(K)$, it is easy to prove that  there exists a positive constant $C_K$ satisfying
\begin{align*}
&C_K\left(\|v_h\|_{0,K}^2+\delta\,\|{\bm{\omega}}_l\cdot \nabla v_h \|^2_{0,K}
+\frac12\left<{ v^l_-},v^l_-\left|{\bm{\omega}_l}
\cdot \bm{n}\right|\right>_{\partial^l_+ K}\right)\\
&\qquad \le \left({\bm{\omega}}_l\cdot \nabla v_h +\sigma_tv_h,v_h
+\delta\, {\bm{\omega}_l}\cdot \nabla v_h\right)_K
+\left<{ v^l_+},v^l_+|{\bm{\omega}_l}\cdot \bm{n}|\right>_{\partial^l_- K}
\end{align*}
when $h$ is sufficiently small. Then the unique solvability of \eqref{DODSD_SIb}--\eqref{DODSD_SIe}
can be obtained by the above inequality and the Lax-Milgram lemma (see e.g.\ \cite{book:C}).

We perform the above procedure for all directions in one iteration step, and stop the iteration
if some stopping condition is met, and take $\{u^{l,j}_h\}$ as $\{u^l_h\}$.

\subsection{Numerical experiments} \label{sec:NR}
%In this subsection, we present some numerical results to show the convergence performance of the proposed DODSD method and the effect of the.

Let $X=(0,1)\times (0,1)$. We consider the following four examples of the radiative transfer equation
\eqref{eq:rte}--\eqref{eq:rtee}:

{\bf Example 1.} the H-G
phase function with $\eta=0.2$.

{\bf Example 2.} the H-G
phase function with $\eta=0.5$.

{\bf Example 3.} the H-G
phase function with $\eta=0.9$.

{\bf Example 4.} the phase function
\[   g(\bm{x},t)=\frac1{2\pi}\left(1+\frac{t}2 \right).  \]
For {\bf Example 1} - {\bf Example 3}, the true solution is
\begin{equation*}
 u(\bm{x},\bm{\omega})=\sin(\pi x_1)\sin(\pi x_2).
\end{equation*}
And for {\bf Example 4}, the true solution is
\begin{equation*}
  u(\bm{x},\bm{\omega})=e^{-ax_1-bx_2}\left(1+c \cos\theta\right),
\end{equation*}
with $a=b=\frac{\sigma_a}3$ and $c=\frac{\sigma_a}{\sigma_a+6\sigma_s}$.
We set the right hand function $f(\bm{x},\bm{\omega})$ to satisfy the radiative transfer equation.

Let  $T_0=T_{h_0}$ be an initial triangulation of $X$ with a mesh size $h_0\approx 0.1$.
Then we recursively generate a sequence of nested triangulations $T_l=T_{h_l}$, $l=1,2,3$,
by dividing each triangle in the previous mesh $T_{l-1}$ into four sub-triangles by connecting
the midpoints of the edges; $h_l={2^{-l}h_0}$.  Based on these meshes, the linear finite element spaces are
constructed and used in the spatial discretization.
For the angular discretization, we employ the composite trapezoidal rule \eqref{ctf} with
$h_\theta=\pi/10$, $\pi/20$, $\pi/30$ and $\pi/10$ for the above four examples respectively.

We shall use the DODSD method with $\delta=h_l$  to solve these examples.
To measure the difference between the true solution and its approximate solution,
we define the  quantity $|||u-u_h|||_h:=\left(\sum_{i=1}^4\left(|||u-u_h|||^{(i)}\right)^2\right)^\frac12$
with
\begin{eqnarray*}
  |||v|||^{(1)}=\bigg(\sum_{l=0}^Lw_l\sum_{K\in T_h}\|v^l\|_{0,K}^2\bigg)^\frac12,\qquad\quad\
  |||v|||^{(2)}=\bigg(\sum_{l=0}^Lw_l\sum_{\partial^l_+ K\subset  \partial^l_+ X}
\left< v^l_-,v^l_-\bm{\omega}_l\cdot\bm{n}\right>_{\partial^l_+ K}\bigg)^\frac12,\\
  |||v|||^{(3)}=\bigg(\sum_{l=0}^Lw_l\sum_{K\in T_h}
  h_K \|\bm{\omega}_l\cdot\nabla v^l\|_{0,K}^2\bigg)^\frac12,\quad
  |||v|||^{(4)}=\bigg(\sum_{l=0}^Lw_l\sum_{K\in T_h}\left< { [v^l]},[v^l]|\bm{\omega}_l
  \cdot \bm{n}|\right>_{\partial^l_- K}\bigg)^\frac12.
\end{eqnarray*}

\subsubsection{Numerical convergence rates}
In this subsection, we take $\sigma_t=10$, $\sigma_s=0.1$.
Errors for these four examples are reported in Table \ref{tables51}--Table \ref{tables54} and
Figure \ref{fig:s51}--Figure \ref{fig:s54}. For all these examples, we can see that
$|||u-u_h|||^{(i)}$, $i=1, 2, 3$, are approximately $O(h^2)$, and that $|||u-u_h|||^{(4)}\approx O(h^{1.5})$.
Since $|||u-u_h|||\approx |||u-u_h|||_h$, we can conclude that $|||u-u_h|||=O(h^{1.5})$ for all these examples,
which agrees with our theoretical error estimates.

\begin{table}[ht]
\begin{center}
\caption{Error for Example 1}\label{tables51}
%\begin{tabular*}{0.8\textwidth}{p{0.6cm}p{2.4cm}p{2.4cm}p{2.4cm}p{2.4cm}p{2.4cm}}
\begin{tabular*}{0.8\textwidth}{@{\extracolsep{\fill} } c c c c c c}
\toprule[1pt]
$l$& $|||u-u_h|||^{(1)}$&$ |||u-u_h|||^{(2)}$ & $ |||u-u_h|||^{(3)}$ & $|||u-u_h|||^{(4)}$& $|||u-u_h|||_h$\\
\midrule[1pt]
$0$             &5.3989e-3  & 6.1012e-3  &7.6011e-2  &3.3398e-2&8.3424e-2\\
\midrule[0.5pt]
$1$             &1.3923e-3  & 1.6388e-3  &2.6936e-2  &1.2970e-2&2.9973e-2\\
\midrule[0.5pt]
$2$             &3.5459e-4  & 4.3395e-4  &9.5335e-3  &4.8451e-3&1.0709e-2\\
\midrule[0.5pt]
$3$             &8.9879e-5  & 1.1300e-4  &3.3722e-3  &1.7661e-3&3.8094e-3\\
\bottomrule[1pt]
\end{tabular*}
\end{center}
\end{table}

\begin{figure}[ht]
\centering
\pgfplotsset{width=8cm}
\begin{tikzpicture}
\begin{loglogaxis}[
%title=Convergence Plot,
xlabel={h},
ylabel=error,
legend pos=outer north east,% north west,
legend entries={\tiny $|||u-u_h|||^{(1)}$,\tiny $ |||u-u_h|||^{(2)}$, \tiny $|||u-u_h|||^{(3)}$, \tiny $|||u-u_h|||^{(4)}$}]
\addplot table[x=h,y=e1] {
h      e1
1.00e-1  5.3989e-003
5.00e-2  1.3923e-003
2.50e-2  3.5459e-004
1.25e-2  8.9879e-005
};
\addplot table[x=h,y=e2] {
h      e2
1.00e-1   6.1012e-003
5.00e-2   1.6388e-003
2.50e-2   4.3395e-004
1.25e-2   1.1300e-004
};
%\draw (5.0e-2, 1.698e-2) --node[above]{$1$} (2.5e-2, 1.698e-2)
%        -- node[left]{$1.5$}(2.5e-2, 6e-3) -- (5.0e-2, 1.698e-2);
\draw (5.0e-2, 0.99e-2) --node[right]{$1.5$} (5.0e-2, 3.5e-3)
        -- node[below]{$1$}(2.5e-2, 3.5e-3) -- (5.0e-2, 0.99e-2);
\addplot table[x=h,y=e3] {
h      e3
1.00e-1    7.6011e-002
5.00e-2    2.6936e-002
2.50e-2    9.5335e-003
1.25e-2    3.3722e-003
};
\addplot table[x=h,y=e4] {
h       e1              e2            e3             e4
1.00e-1  5.3989e-003  6.1012e-003  7.6011e-002  3.3398e-002
5.00e-2  1.3923e-003  1.6388e-003  2.6936e-002  1.2970e-002
2.50e-2  3.5459e-004  4.3395e-004  9.5335e-003  4.8451e-003
1.25e-2  8.9879e-005  1.1300e-004  3.3722e-003  1.7661e-003
};
\draw (5.0e-2, 1.04e-3) --node[right]{$2.0$} (5.0e-2, 2.6e-4)
        -- node[below]{$1$}(2.5e-2, 2.6e-4) -- (5.0e-2, 1.04e-3);
\end{loglogaxis}
\end{tikzpicture}
\caption{Loglog convergence plot of $|||u-u_h|||^{(i)}$ ($i=1,2,3,4$)
vs.\ $h$ for Example 1}\label{fig:s51}
\end{figure}

\begin{table}[ht]
\begin{center}
\caption{Error for Example 2}\label{tables52}
%\begin{tabular*}{0.8\textwidth}{p{0.6cm}p{2.4cm}p{2.4cm}p{2.4cm}p{2.4cm}p{2.4cm}}
\begin{tabular*}{0.8\textwidth}{@{\extracolsep{\fill} } c c c c c c}
\toprule[1pt]
$T_l$& $|||u-u_h|||^{(1)}$&$ |||u-u_h|||^{(2)}$ & $ |||u-u_h|||^{(3)}$ & $|||u-u_h|||^{(4)}$& $|||u-u_h|||_h$\\
\midrule[1pt]
$0$             &5.3951e-3  &6.1766e-3  & 7.6014e-2  & 3.3452e-2 &8.3453e-2\\
\midrule[0.5pt]
$1$             &1.3904e-3  &1.6591e-3  & 2.6937e-2  & 1.2994e-2 &2.9985e-2 \\
\midrule[0.5pt]
$2$             &3.5412e-4  &4.4085e-4  & 9.5337e-3  & 4.8564e-3 &1.0714e-2 \\
\midrule[0.5pt]
$3$             &8.9791e-5  &1.1504e-4  & 3.3723e-3  & 1.7711e-3 &3.8119e-3 \\
\bottomrule[1pt]
\end{tabular*}
\end{center}
\end{table}

\begin{figure}[ht] \vspace*{1cm}
\centering
\pgfplotsset{width=8cm}
\begin{tikzpicture}
\begin{loglogaxis}[
%title=Convergence Plot,
xlabel={h},
ylabel=error,
legend pos=outer north east,% north west,
legend entries={\tiny $|||u-u_h|||^{(1)}$,\tiny $ |||u-u_h|||^{(2)}$, \tiny $|||u-u_h|||^{(3)}$, \tiny $|||u-u_h|||^{(4)}$}]
\addplot table[x=h,y=e1] {
h      e1
1.00e-1   5.3951e-003
5.00e-2   1.3904e-003
2.50e-2   3.5412e-004
1.25e-2   8.9791e-005
};
\addplot table[x=h,y=e2] {
h      e2
1.00e-1   6.1766e-003
5.00e-2   1.6591e-003
2.50e-2   4.4085e-004
1.25e-2   1.1504e-004
};
\draw (5.0e-2, 0.99e-2) --node[right]{$1.5$} (5.0e-2, 3.5e-3)
        -- node[below]{$1$}(2.5e-2, 3.5e-3) -- (5.0e-2, 0.99e-2);

\addplot table[x=h,y=e3] {
h      e3
1.00e-1   7.6014e-002
5.00e-2   2.6937e-002
2.50e-2   9.5337e-003
1.25e-2   3.3723e-003
};
\addplot table[x=h,y=e4] {
h       e1              e2            e3             e4
1.00e-1  5.3951e-003  6.1766e-003  7.6014e-002  3.3452e-002
5.00e-2  1.3904e-003  1.6591e-003  2.6937e-002  1.2994e-002
2.50e-2  3.5412e-004  4.4085e-004  9.5337e-003  4.8564e-003
1.25e-2  8.9791e-005  1.1504e-004  3.3723e-003  1.7711e-003
};

\draw (5.0e-2, 1.04e-3) --node[right]{$2.0$} (5.0e-2, 2.6e-4)
        -- node[below]{$1$}(2.5e-2, 2.6e-4) -- (5.0e-2, 1.04e-3);
\end{loglogaxis}
\end{tikzpicture}
\caption{Loglog convergence plot of $|||u-u_h|||^{(i)}$ ($i=1,2,3,4$)
vs.\ $h$ for Example 2}\label{fig:s52}
\end{figure}

\begin{table}[ht]  \vspace*{1cm}
\begin{center}
\caption{Error for Example 3}\label{tables53}
%\begin{tabular*}{0.8\textwidth}{p{0.6cm}p{2.4cm}p{2.4cm}p{2.4cm}p{2.4cm}p{2.4cm}}
\begin{tabular*}{0.8\textwidth}{@{\extracolsep{\fill} } c c c c c c }
\toprule[1pt]
$l$& $|||u-u_h|||^{(1)}$&$ |||u-u_h|||^{(2)}$ & $ |||u-u_h|||^{(3)}$ & $|||u-u_h|||^{(4)}$& $|||u-u_h|||_h$\\
\midrule[1pt]
$0$             &5.3969e-3  &6.1958e-3  &7.6013e-2 &3.3459e-2&8.3456e-2\\
\midrule[0.5pt]
$1$             &1.3910e-3  &1.6639e-3  &2.6936e-2 &1.2996e-2&2.9986e-2\\
\midrule[0.5pt]
$2$             &3.5655e-4  &4.4334e-4  &9.5332e-3 &4.8567e-3&1.0714e-2\\
\midrule[0.5pt]
$3$             &9.9651e-5  &1.1797e-4  &3.3719e-3 &1.7711e-3&3.8118e-3\\
\bottomrule[1pt]
\end{tabular*}
\end{center}
\end{table}

\begin{figure}[ht]
\centering
\pgfplotsset{width=8cm}
\begin{tikzpicture}
\begin{loglogaxis}[
%title=Convergence Plot,
xlabel={h},
ylabel=error,
legend pos=outer north east,% north west,
legend entries={\tiny $|||u-u_h|||^{(1)}$,\tiny $ |||u-u_h|||^{(2)}$, \tiny $|||u-u_h|||^{(3)}$, \tiny $|||u-u_h|||^{(4)}$}]
\addplot table[x=h,y=e1] {
h      e1
1.00e-1   5.3969e-003
5.00e-2   1.3910e-003
2.50e-2   3.5655e-004
1.25e-2   9.9651e-005
};
\addplot table[x=h,y=e2] {
h      e2
1.00e-1   6.1958e-003
5.00e-2   1.6639e-003
2.50e-2   4.4334e-004
1.25e-2   1.1797e-004
};
\draw (5.0e-2, 0.99e-2) --node[right]{$1.5$} (5.0e-2, 3.5e-3)
        -- node[below]{$1$}(2.5e-2, 3.5e-3) -- (5.0e-2, 0.99e-2);

\addplot table[x=h,y=e3] {
h      e3
1.00e-1  7.6013e-002
5.00e-2  2.6936e-002
2.50e-2  9.5332e-003
1.25e-2  3.3719e-003
};
\addplot table[x=h,y=e4] {
h       e1              e2            e3             e4
1.00e-1  5.3969e-003  6.1958e-003  7.6013e-002  3.3459e-002
5.00e-2  1.3910e-003  1.6639e-003  2.6936e-002  1.2996e-002
2.50e-2  3.5655e-004  4.4334e-004  9.5332e-003  4.8567e-003
1.25e-2  9.9651e-005  1.1797e-004  3.3719e-003  1.7711e-003
};
\draw (5.0e-2, 1.04e-3) --node[right]{$2.0$} (5.0e-2, 2.6e-4)
        -- node[below]{$1$}(2.5e-2, 2.6e-4) -- (5.0e-2, 1.04e-3);
\end{loglogaxis}
\end{tikzpicture}
\caption{Loglog convergence plot of $|||u-u_h|||^{(i)}$ ($i=1,2,3,4$)
vs.\ $h$ for Example 3}\label{fig:s53}
\end{figure}

\begin{table}[ht] \vspace*{0.5cm}
\begin{center}
\caption{Error for Example 4}\label{tables54}
%\begin{tabular*}{0.8\textwidth}{p{0.6cm}p{2.4cm}p{2.4cm}p{2.4cm}p{2.4cm}p{2.4cm}}
\begin{tabular*}{0.8\textwidth}{@{\extracolsep{\fill} } c c c c c c }
\toprule[1pt]
$l$& $|||u-u_h|||^{(1)}$&$ |||u-u_h|||^{(2)}$ & $ |||u-u_h|||^{(3)}$ & $|||u-u_h|||^{(4)}$& $|||u-u_h|||_h$\\
\midrule[1pt]
$0$             &3.6110e-3 &2.6820e-3 &  3.1865e-2 & 1.2766e-2   &3.4620e-2 \\
\midrule[0.5pt]
$1$             &9.1999e-4 &7.8409e-4 &  1.1233e-2 & 5.1355e-3   &1.2410e-2\\
\midrule[0.5pt]
$2$             &2.3272e-4 &2.1117e-4 &  3.9834e-3 & 1.9544e-3   &4.4481e-3\\
\midrule[0.5pt]
$3$             &5.8632e-5 &5.5119e-5 &  1.4122e-3 & 7.1893e-4   &1.5867e-3 \\
\bottomrule[1pt]
\end{tabular*}
\end{center}
\end{table}

\begin{figure}[htp]\vspace*{1cm}
\centering
\pgfplotsset{width=8cm}
\begin{tikzpicture}
\begin{loglogaxis}[
%title=Convergence Plot,
xlabel={h},
ylabel=error,
legend pos=outer north east,% north west,
legend entries={\tiny $|||u-u_h|||^{(1)}$,\tiny $ |||u-u_h|||^{(2)}$, \tiny $|||u-u_h|||^{(3)}$, \tiny $|||u-u_h|||^{(4)}$}]
\addplot table[x=h,y=e1] {
h      e1
1.00e-1  3.6110e-003
5.00e-2  9.1999e-004
2.50e-2  2.3272e-004
1.25e-2  5.8632e-005
};
\addplot table[x=h,y=e2] {
h      e2
1.00e-1    2.6820e-003
5.00e-2    7.8409e-004
2.50e-2    2.1117e-004
1.25e-2    5.5119e-005
};
\draw (5e-2,     3.9598e-003) --node[right]{$1.5$} (5e-2, 1.4e-3)
        -- node[below]{$1$}(2.5e-2, 1.4e-3) -- (5e-2,     3.9598e-003);

\addplot table[x=h,y=e3] {
h      e3
1.00e-1    3.1865e-002
5.00e-2    1.1233e-002
2.50e-2    3.9834e-003
1.25e-2    1.4122e-003
};
\addplot table[x=h,y=e4] {
h       e1              e2            e3             e4
1.00e-1  3.6110e-003  2.6820e-003  3.1865e-002  1.2766e-002
5.00e-2  9.1999e-004  7.8409e-004  1.1233e-002  5.1355e-003
2.50e-2  2.3272e-004  2.1117e-004  3.9834e-003  1.9544e-003
1.25e-2  5.8632e-005  5.5119e-005  1.4122e-003  7.1893e-004
};
\draw (5e-2, 6e-4) --node[right]{$2.0$} (5e-2, 1.5e-4)
        -- node[below]{$1$}(2.5e-2, 1.5e-4) -- (5e-2, 6e-4);
\end{loglogaxis}
\end{tikzpicture}
\caption{Loglog convergence plot of $|||u-u_h|||^{(i)}$ ($i=1,2,3,4$)
vs.\ $h$ for Example 4}\label{fig:s54}
\end{figure}

%\clearpage

\subsubsection{Comparison with the DODG method}
In order to show the effects of adding the artificial diffusion term,
we report the error  of  the DODG method in norm $|||\cdot|||_h$ for the four examples in Table \ref{table52}.
The comparisons of the DODSD method and the DODG method are also shown in Figure \ref{fig:51}.
We observe that: 1) both  the DODSD method and the DODG method have the similar convergence rates;
%2) the DODSD method converges a little faster than the DODG method in {\bf Example 4},
%of which the true solution  has large gradient at origin; 3)
2) the DODSD method can lead to some improvement of the accuracy in norm $|||\cdot|||_h$ compared to the DODG method.

\begin{table}[htb]
\begin{center}
\caption{Results of the DODG method}\label{table52}
%\begin{tabular*}{0.6\textwidth}{p{1cm}p{2cm}p{2cm}p{2cm}p{2cm}}
\begin{tabular*}{0.6\textwidth}{@{\extracolsep{\fill} } c c c c c}
\toprule[1pt]
$l$           & {\bf Example 1}& {\bf Example 2}& {\bf Example 3}& {\bf Example 4}\\
\midrule[0.5pt]
$0$             &9.6214e-2 &9.6254e-2 &9.6265e-2 & 3.8551e-2 \\
\midrule[0.5pt]
$1$             &3.5124e-2 &3.5140e-2 &3.5141e-2 & 1.4422e-2 \\
\midrule[0.5pt]
$2$             &1.2668e-2 &1.2674e-2 &1.2673e-2 & 5.2930e-3 \\
\midrule[0.5pt]
$3$             &4.5303e-3 &4.5324e-3 &4.5316e-3 & 1.9121e-3 \\
\bottomrule[1pt]
\end{tabular*}
\end{center}
\end{table}
\begin{figure}[htb]
\centering
\pgfplotsset{width=6cm}
\begin{tikzpicture}
\begin{loglogaxis}[
title=Example 1,
xlabel={h},
ylabel=error,
]
\addplot table[x=h,y=e1] {
h      e1
1.00e-1  8.3424e-002
5.00e-2  2.9973e-002
2.50e-2  1.0709e-002
1.25e-2  3.8094e-003
};
\addplot table[x=h,y=e12] {
h      e12
1.00e-1  9.6214e-002
5.00e-2  3.5124e-002
2.50e-2  1.2668e-002
1.25e-2  4.5303e-003
};
\draw (5.0e-2,  2.2627e-002) --node[right]{$1.5$} (5.0e-2, 8e-3)
        -- node[below]{$1$}(2.5e-2, 8e-3) -- (5.0e-2,      2.2627e-002);
\end{loglogaxis}
\end{tikzpicture}
\begin{tikzpicture}
\begin{loglogaxis}[
title=Example 2,
xlabel={h},
ylabel=error,
]
\addplot table[x=h,y=e2] {
h      e2
1.00e-1  8.3453e-002
5.00e-2  2.9985e-002
2.50e-2  1.0714e-002
1.25e-2  3.8119e-003
};
\addplot table[x=h,y=e22] {
h      e22
1.00e-1   9.6254e-002	
5.00e-2   3.5140e-002	
2.50e-2   1.2674e-002	
1.25e-2   4.5324e-003	
};

\draw (5.0e-2,  2.2627e-002) --node[right]{$1.5$} (5.0e-2, 8e-3)
        -- node[below]{$1$}(2.5e-2, 8e-3) -- (5.0e-2,      2.2627e-002);
\end{loglogaxis}
\end{tikzpicture}
\begin{tikzpicture}
\begin{loglogaxis}[
title=Example 3,
xlabel={h},
ylabel=error,
]
\addplot table[x=h,y=e3] {
h      e3
1.00e-1 8.3456e-002
5.00e-2 2.9986e-002
2.50e-2 1.0714e-002
1.25e-2 3.8118e-003
};
\addplot table[x=h,y=e32] {
h      e32
1.00e-1  9.6265e-002
5.00e-2  3.5141e-002
2.50e-2  1.2673e-002
1.25e-2  4.5316e-003
};
\draw (5.0e-2,  2.2627e-002) --node[right]{$1.5$} (5.0e-2, 8e-3)
        -- node[below]{$1$}(2.5e-2, 8e-3) -- (5.0e-2,      2.2627e-002);
\end{loglogaxis}
\end{tikzpicture}
\begin{tikzpicture}
\begin{loglogaxis}[
title=Example 4,
xlabel={h},
ylabel=error,
]
\addplot table[x=h,y=e4] {
h      e4
1.00e-1   3.4620e-002
5.00e-2   1.2410e-002
2.50e-2   4.4481e-003
1.25e-2   1.5867e-003
};
\addplot table[x=h,y=e42] {
h      e42
1.00e-1   3.8551e-002	
5.00e-2   1.4422e-002	
2.50e-2   5.2930e-003	
1.25e-2   1.9121e-003
};
\draw (5.0e-2, 9.6167e-003) --node[right]{$1.5$} (5.0e-2, 3.4e-3)
        -- node[below]{$1$}(2.5e-2, 3.4e-3) -- (5.0e-2, 9.6167e-003);
\end{loglogaxis}
\end{tikzpicture}
\caption{Loglog convergence plot of $|||u-u_h|||_h$
vs.\ $h$ (red line: DODG method; blue line: DODSD method)}\label{fig:51}
\end{figure}
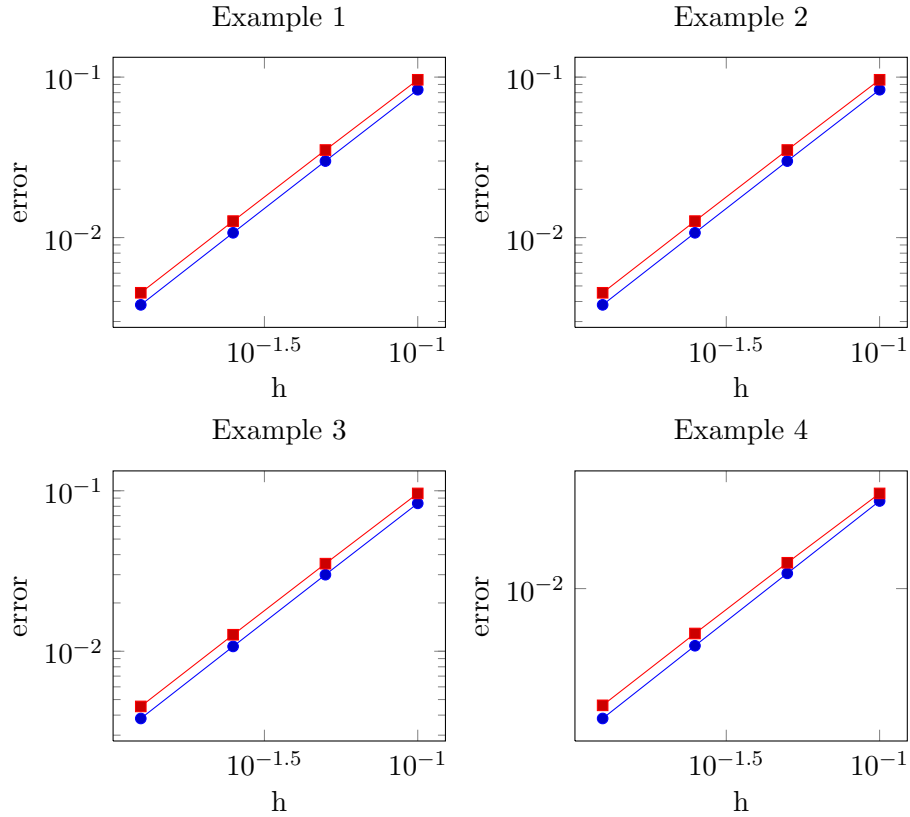

\section{Conclusion}\label{sec:conclusion}
In this paper, we present a discrete-ordinate discontinuous-streamline diffusion method for
solving the radiative transfer equation. This method applies the discrete ordinate technique to deal with
the integration term of the radiative transfer equation in the angular discretization, and employs the
discontinuous-streamline diffusion method for the spatial discretization. The stability property
and unique solvability of the discrete system are proved. Under suitable solution regularity assumptions,
error estimates for the numerical solutions are derived in a norm including the directional gradient.
Numerical results confirm the convergence behavior of the proposed method.

The main difference between the DODSD method and the DODG method is in the additional artificial diffusion term.
Our numerical experiments show that such a modification can improve the accuracy of numerical solutions in term
of $|||\cdot|||_h$ norm in comparison with the DODG method.
As for the effect of the artificial diffusion parameter $\delta$, we remark that it may reduce the error
$|||u-u_h|||^{(3)}$ and $|||u-u_h|||^{(4)}$ while increase the error $|||u-u_h|||^{(1)}$ and $|||u-u_h|||^{(2)}$.
Since $|||u-u_h|||^{(1)}$ and $|||u-u_h|||^{(2)}$ converge faster than $|||u-u_h|||^{(3)}$ and $|||u-u_h|||^{(4)}$,
the DODSD method with an appropriate $\delta$ is expected to be  more accurate in $|||\cdot|||$ norm in comparison
with the DODG method.
%However, choosing the optimal or near-optimal artificial diffusion parameters in the discretization formulation is still an open problem.

%While the investigations of basic aspects of the discrete-ordinate discontinuous-streamline
%diffusion method in this paper indicate the method is promising in solving the RTE,
%there is much to be done in the future. Challenges include
%% choosing the optimal or near-optimal artificial diffusion parameters in the formulation,
%estimating the errors a posteriori, developing adaptive algorithms for simultaneous spatial
%mesh refinement, selecting proper numerical quadratures on the unit sphere, and implementing
%the algorithms in parallel.

%\clearpage

%\bibliographystyle{unsrt}
\bibliographystyle{abbrv}
\bibliography{DSD2RTE}

\begin{thebibliography}{10}

\bibitem{agoshkov1998boundary}
V.~Agoshkov.
\newblock {\em Boundary Value Problems for Transport Equations}.
\newblock Modeling and Simulation in Science, Engineering and Technology.
  Birkh{\"a}user Boston, 1998.

\bibitem{A2009}
S.~R. Arridge and J.~C. Schotland.
\newblock Optical tomography: forward and inverse problems.
\newblock {\em Inverse Problems}, 25(12):123010, 2009.

\bibitem{AK2013}
M.~Asadzadeh and E.~Kazemi.
\newblock On convergence of the streamline diffusion and discontinuous galerkin
  methods for the multi-dimensional fermi pencil beam equation.
\newblock {\em Int. J. Numer. Anal. Mod.}, 10(4):860--875, 2013.

\bibitem{AK2004}
M.~Asadzadeh and P.~Kowalczyk.
\newblock Convergence analysis of the streamline diffusion and discontinuous
  galerkin methods for the vlasov-fokker-planck system.
\newblock {\em Numerical Methods for Partial Differential Equations},
  21(3):472--495, 2005.

\bibitem{Atkinson1989}
K.~Atkinson.
\newblock {\em An Introduction to Numerical Analysis, 2nd ed.}
\newblock John Wiley \& Sons, 1989.

\bibitem{Bal09}
G.~Bal.
\newblock Inverse transport theory and applications.
\newblock {\em Inverse Problems}, 25(5):053001, 2009.

\bibitem{book:BS2008}
S.~Brenner and R.~Scott.
\newblock {\em The Mathematical Theory of Finite Element Methods}, volume~15 of
  {\em Texts in Applied Mathematics}.
\newblock Springer, 3rd edition, 2008.

\bibitem{C1955}
B.~G. Carlson.
\newblock Solution of the transport equation by the {$S_n$} method.
\newblock {\em Los Alamos National Laboratory}, 1955.

\bibitem{SunTanWu1998}
K.~W. Che~Sun, Huaimin~Tan.
\newblock The discontinuous-streamline diffusion method for first-order
  hyperbolic equation.
\newblock {\em Mathematica Numerica Sinica}, 20(1):35--44, Feb. 1998.

\bibitem{book:C}
P.~Ciarlet.
\newblock {\em The Finite Element Method for Elliptic Problems}.
\newblock Society for Industrial and Applied Mathematics, 2002.

\bibitem{ES2012}
H.~Egger and M.~Schlottbom.
\newblock A mixed variational framework for the radiative transfer equation.
\newblock {\em Mathematical Models and Methods in Applied Sciences},
  22(03):1150014, 2012.

\bibitem{FC1971}
J.~A. {Fleck Jr.} and J.~D. {Cummings Jr.}
\newblock An implicit {Monte} {Carlo} scheme for calculating time and frequency
  dependent nonlinear radiation transport.
\newblock {\em Journal of Computational Physics}, 8(3):313--342, 1971.

\bibitem{GZ2013}
H.~Gao and H.~Zhao.
\newblock Analysis of a numerical solver for radiative transport equation.
\newblock {\em Mathematics of Computation}, 82(281):153--172, 2013.

\bibitem{HECW11}
W.~Han, J.~Eichholz, X.~Cheng, and G.~Wang.
\newblock A theoretical framework of {X}-ray dark-field tomography.
\newblock {\em SIAM Journal on Applied Mathematics}, 71(5):1557--1577, 2011.

\bibitem{HES2013}
W.~Han, J.~Eichholz, and Q.~Sheng.
\newblock Theory of differential approximations of radiative transfer equation.
\newblock In {\em Advances in Applied Mathematics and Approximation Theory},
  pages 121--148. Springer, 2013.

\bibitem{HHE2010}
W.~Han, J.~Huang, and J.~Eichholz.
\newblock Discrete-ordinate discontinuous {Galerkin} methods for solving the
  radiative transfer equation.
\newblock {\em SIAM Journal on Scientific Computing}, 32(2):477--497, 2010.

\bibitem{han2013numerical}
W.~Han, Y.~Li, Q.~Sheng, and J.~Tang.
\newblock A numerical method for generalized {Fokker}-{Planck} equations.
\newblock {\em Recent Advances in Scientific Computing and Applications},
  586:171, 2013.

\bibitem{HesseSloan2006}
K.~Hesse and I.~H. Sloan.
\newblock Cubature over the sphere $\mathrm{S}^2$ in {Sobolev} spaces of
  arbitrary order.
\newblock {\em J. Approx. Theory}, 141:118--133, 2006.

\bibitem{H1998}
J.~R. Howell.
\newblock The {Monte} {Carlo} method in radiative heat transfer.
\newblock {\em Journal of Heat Transfer}, 120(3):547--560, 1998.

\bibitem{Hughes1986341}
T.~J. Hughes, M.~Mallet, and M.~Akira.
\newblock A new finite element formulation for computational fluid dynamics:
  Ii. beyond \{SUPG\}.
\newblock {\em Computer Methods in Applied Mechanics and Engineering},
  54(3):341--355, 1986.

\bibitem{Johnson198199}
C.~Johnson and U.~N\"{a}vert.
\newblock An analysis of some finite element methods for advection-diffusion
  problems.
\newblock In O.~Axelsson, L.~Frank, and A.~V.~D. Sluis, editors, {\em
  Analytical and Numerical Approaches to Asymptotic Problems in Analysis
  Proceedings ofthe Conference on Analytical and Numerical Approachesto
  Asymptotic Problems}, volume~47 of {\em North-Holland Mathematics Studies},
  pages 99--116. North-Holland, 1981.

\bibitem{KKWV1995}
R.~Koch, W.~Krebs, S.~Wittig, and R.~Viskanta.
\newblock The discrete ordinate quadrature schemes for multidimensional
  radiative transfer.
\newblock {\em Journal of Quantitative Spectroscopy and Radiative Transfer},
  53, 1995.

\bibitem{LM2010}
E.~W. Larsen and J.~E. Morel.
\newblock Advances in discrete-ordinates methodology.
\newblock In {\em Nuclear Computational Science}, pages 1--84. Springer
  Netherlands, 2010.

\bibitem{book:LK1991}
I.~Lux and K.~L.
\newblock {\em {Monte} {Carlo} Particle Transport Methods: Neutron and Photon
  Calculations}.
\newblock CRC Press, 1991.

\bibitem{McClarren20102290}
R.~G. McClarren and C.~D. Hauck.
\newblock Simulating radiative transfer with filtered spherical harmonics.
\newblock {\em Physics Letters A}, 374(22):2290--2296, 2010.

\bibitem{Ren10}
K.~Ren.
\newblock Recent developments in numerical techniques for transport-based
  medical imaging methods.
\newblock {\em Communications in Computational Physics}, 8:1--50, 2010.

\bibitem{SG2008}
J.~Spanier and E.~Gelbard.
\newblock {\em {Monte} {Carlo} Principles and Neutron Transport Problems}.
\newblock Dover, New York, 2008.

\bibitem{THH13}
J.~Tang, W.~Han, and B.~Han.
\newblock A theoretical study for {RTE}-based parameter identification
  problems.
\newblock {\em Inverse Problems}, 29(9):095002, 2013.

\end{thebibliography}

\end{document}